\newcolumntype{L}[1]{>{\raggedright\let\newline\\\arraybackslash\hspace{0pt}}m{#1}}
\newcolumntype{C}[1]{>{\centering\let\newline\\\arraybackslash\hspace{0pt}}m{#1}}
\newcolumntype{R}[1]{>{\raggedleft\let\newline\\\arraybackslash\hspace{0pt}}m{#1}}
\def\ps@pprintTitle{%
\let\@oddhead\@empty
\let\@evenhead\@empty
\def\@oddfoot{\centerline{\thepage}}%
\let\@evenfoot\@oddfoot}
\newtheorem{theorem}{Theorem}[section]
\newtheorem{lemma}[theorem]{Lemma}
\newtheorem{proposition}[theorem]{Proposition}
\newtheorem{definition}[theorem]{Definition}
\DeclareMathOperator\arctanh{arctanh}
\DeclareMathOperator\sech{sech}
\begin{document}

\begin{frontmatter}
\title{The length of the shortest closed geodesic on positively curved 2-spheres}
\author{Ian Adelstein and Franco Vargas Pallete}
\address{Department of Mathematics, Yale University \\ New Haven, CT 06520 United States}
\begin{abstract} We show that the shortest closed geodesic on a 2-sphere with non-negative curvature has length bounded above by three times the diameter. We prove a new isoperimetric inequality for 2-spheres with pinched curvature; this allows us to improve our bound on the length of the shortest closed geodesic in the pinched curvature setting. 
\end{abstract}
\begin{keyword} closed geodesics; geodesic nets; spheres
\MSC[2010]  53C22 \sep 53C23
\end{keyword}
\end{frontmatter}

\section{Introduction}

Gromov \cite{Gro83} has asked if there exist constants $c(n)$ such that the length of the shortest closed geodesic $L(M^n)$ on a closed Riemannian manifold $M^n$ is bounded above by $c(n) D(M^n)$, where $D(M^n)$ is the diameter of the manifold. On non-simply connected manifolds the shortest non-contractible closed curve is a geodesic with length bounded above by $2D(M^n)$. On manifolds homeomorphic to the 2-sphere Croke \cite{croke1} provided the first bound of $L(S^2,g) \leq 9D(S^2,g)$, which was improved by Maeda \cite{maeda}, and finally by Nabutovsky and Rotman \cite{NR2002} and independently Sabourau \cite{Sab} to $L(S^2,g) \leq 4D(S^2,g)$. Rotman \cite{rot5} has further proved that $L(S^2,g) \leq 4R(S^2,g)$ where $$R(S^2,g)= \min_{x \in S^2} \max_{y \in S^2} d(x,y) \leq D(S^2,g) =  \max_{x \in S^2} \max_{y \in S^2} d(x,y)$$ is the radius of the Riemannian sphere. 
Gromov's question is open for simply connected manifolds in dimensions $n \geq 3$. 

An attractive conjecture is that $L(M^n) \leq 2D(M^n)$ for all closed Riemannian manifolds $M^n$. To explore this bound one might consider Zoll spheres:~metrics on the 2-sphere all of whose geodesics are closed and of the same length. The conjecture turns out to be overly optimistic, as Balacheff, Croke, and Katz \cite{bck} have produced Zoll spheres with $L(S^2, Zoll) > 2D(S^2, Zoll)$. These examples are not constructive, and it is unknown how much longer than $2D(S^2, g)$ the shortest closed geodesic could be. In this paper we prove the following:

\begin{theorem}\label{main} Non-negatively curved 2-spheres have $L(S^2, g) \leq 3R(S^2, g) \leq 3D(S^2, g)$.
\end{theorem}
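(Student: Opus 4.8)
The plan is to exhibit a closed geodesic of length at most $3R(S^2,g)$ by running a sweepout / min-max argument anchored at the point that realizes the radius, using the hypothesis $K\ge 0$ both to keep the sweepout slices short and to control the object produced by the min-max.

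To set up, fix $p\in S^2$ with $\max_{y\in S^2}d(p,y)=R:=R(S^2,g)$, so $\overline{B(p,R)}=S^2$. Along any minimizing geodesic issuing from $p$ the arclength parameter equals the distance to $p$ and is therefore at most $R$; hence every geodesic from $p$ stops being minimizing no later than time $R$, the cut locus $C(p)$ is a tree lying in $\overline{B(p,R)}$, and the farthest points of $p$ lie on $C(p)$ at distance exactly $R$. Decompose $C(p)$ into edges and branch points. An interior point $q$ of an edge is joined to $p$ by exactly two minimizing geodesics $\gamma_1,\gamma_2$, each of length $d(p,q)\le R$; since two distinct minimizing geodesics between the same pair of points meet only at their endpoints, $\gamma_1\cup\gamma_2$ is an embedded closed curve --- a geodesic ``digon'' --- of length $2d(p,q)\le 2R$. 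A branch point is joined to $p$ by (at least) three minimizing geodesics whose union is a ``tripod'' of total length at most $3R$. I would take these as the slices of a sweepout of $S^2$: as $q$ traverses the tree $C(p)$ the digons vary continuously, the passage through a branch point being mediated by the tripod there, and one completes the family near $p$ by collapsing a digon along its own legs. Modulo verifying that this family genuinely sweeps out $S^2$ --- which requires care precisely near $p$ and near the branch points of $C(p)$ --- this is a sweepout all of whose slices have length at most $3R$, with equality approached only near branch points of the cut locus.

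Besides keeping the auxiliary slices short, $K\ge 0$ is used to confirm the length estimate for the interpolating slices: Gauss--Bonnet applied to the two disks cut out by a digon, together with $\int_{S^2}K=4\pi$ and $K\ge 0$, controls how curvature and geodesic turning are apportioned between the two sides and hence the lengths of the curves that connect consecutive slices. Feeding this $3R$-bounded sweepout into the min-max machinery produces a nonconstant stationary geodesic object of length at most $3R$; if that object is a simple closed geodesic, then $L(S^2,g)\le 3R$ and we are done.

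The principal obstacle is that the min-max critical point is a priori a stationary geodesic net --- a figure-eight or a theta-graph --- rather than a simple closed geodesic; this is precisely why geodesic nets enter the problem (the tripod slices above already prevent the sweepout from being by embedded curves, so one cannot simply invoke Grayson's curve-shortening flow to rule nets out). Disposing of the net case is the second use of $K\ge 0$: applying Gauss--Bonnet to each complementary region of the net, the total curvature budget $4\pi$ together with non-negativity severely restricts the possible combinatorial types and the geometry of the complementary disks, and one shows that either no such stationary net can occur, or one extracts from it --- at no increase in length --- a genuine simple closed geodesic. In all cases $L(S^2,g)\le 3R$, and $3R\le 3D$ since $R\le D$.
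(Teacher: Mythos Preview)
Your cut-locus sweepout is a reasonable starting point, but the proposal has a genuine gap at the decisive step: disposing of the stationary theta-graph.  Gauss--Bonnet does \emph{not} rule these out.  For a stationary theta-graph the three complementary disks each have two corners of interior angle $2\pi/3$, hence total exterior turning $2\pi/3$, and Gauss--Bonnet gives $\int K\,dA = 4\pi/3$ in each region; the three integrals sum to $4\pi$, perfectly consistent with $K\ge 0$ and indeed with $K>0$.  Hass and Morgan proved that convex metrics on $S^2$ near the round metric \emph{do} admit stationary theta-graphs, so no curvature-sign argument can exclude them.  Your fallback --- ``extract from it at no increase in length a genuine simple closed geodesic'' --- is likewise unjustified: any two edges of the theta-graph form a digon with genuine corners of angle $2\pi/3$, not a closed geodesic, and you give no mechanism for producing a closed geodesic of length $\le 3R$ from such a configuration.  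Since your sweepout explicitly passes through tripods (theta-graphs) at the branch points of $C(p)$, a min-max on this family can very well land on a stationary theta-graph, and the argument stalls there.

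The paper's route is different in both the set-up and the key lemma.  It runs Rotman's pseudo-filling argument --- attempt to extend a diffeomorphism $f:S^2\to M$ over $D^3$, coned over a fine triangulation, with the cone point sent to a radius-realizing $\tilde p$ --- which reduces the problem to contracting theta-graphs of total length $\le 3R+\delta$ under the length-shortening flow on nets.  The crucial input is a second-variation computation: on a \emph{strictly} positively curved sphere, every stationary theta-graph admits a length-decreasing variation (take parallel unit normal fields of opposite sign along two of the edges, zero normal component on the third, and tangential components chosen to match at the vertices; the boundary terms vanish for the exponential variation and the curvature term is strictly negative).  Thus stationary theta-graphs are never local minima, the flow can be restarted past each one, and by transfinite induction the only genuine obstructions to contraction are closed geodesics or stationary figure-eights --- which in dimension two are self-intersecting closed geodesics --- of length $\le 3R$.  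The extension to $K\ge 0$ is \emph{not} by sharpening this lemma (it fails there: the doubled equilateral triangle carries a one-parameter family of stationary theta-graphs with no directions of decrease), but by conformally perturbing $g$ to nearby strictly positive metrics $g_t=e^{2t\varphi}g$, applying the positive case, and invoking lower semicontinuity of $L$ under smooth convergence as $t\to 0^+$.
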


While one does not expect the inequality $L(S^2, g) \leq 3D(S^2, g)$ to be sharp, we note that the inequality $L(S^2, g) \leq 3R(S^2, g)$ is realized by the metric space formed when gluing two equilateral triangles along their common boundaries, the so called Calabi-Croke sphere. 
The centers of the triangles realize the radius whereas the vertices realize the diameter. The shortest closed geodesic is a doubled altitude which has length exactly 3 times the radius and $\sqrt{3} $ times the diameter. 

We should note that the results cited above are curvature free bounds, whereas our bounds require non-negative curvature. Calabi and Cao \cite{calabi} studied simple closed geodesic on non-negatively curved 2-spheres, and showed that any closed geodesic of shortest length must be simple. We can therefore improve our Theorem~\ref{main} to the following:~on non-negatively curved $(S^2,g)$ the shortest closed geodesic is simple and has length bounded above by three times the radius.

Additional work in the positive curvature setting includes \cite{recent} where pinched metrics on the 2-sphere are studied and an upper bound for the length of the shortest closed geodesic is provided in terms of the area. We combine this result with a new isoperimetric inequality for pinched metrics on the 2-sphere (Theorem~\ref{iso}) to yield the following:

\begin{theorem}\label{pinched}
Let $(S^2,g)$ be a $\delta$-pinched metric with $\delta > \frac{4+\sqrt{7}}{8} \approx 0.83$. Then

\[L(S^2,g) \leq  \frac{2}{\sqrt{\delta}}D(S^2,g)
\]

with equality if and only if the sphere is round.

\end{theorem}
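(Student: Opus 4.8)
The plan is to chain the two ingredients highlighted above: the upper bound for the length of the shortest closed geodesic in terms of area on pinched $2$-spheres from \cite{recent}, and the new isoperimetric inequality of Theorem~\ref{iso}. Rescaling the metric, I would first normalize so that $\max_{S^2}K=1$, hence $\delta\le K\le 1$; since $L$, $D$ and $\mathrm{Area}$ all scale by the same factor this is harmless. The result of \cite{recent} then bounds $L(S^2,g)$ from above by an explicit expression in $\mathrm{Area}(S^2,g)$ and $\delta$, while Theorem~\ref{iso} yields an upper bound for $\mathrm{Area}(S^2,g)$ in terms of $D(S^2,g)$ and $\delta$. Substituting the second into the first produces an inequality of the shape
\[
L(S^2,g)\ \le\ \Psi(\delta)\,D(S^2,g),
\]
and the theorem reduces to the elementary assertions that $\Psi(\delta)\le 2/\sqrt{\delta}$ on the stated range of $\delta$, with equality only for the round metric.

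Concretely, I would write out the two inputs with all constants retained, perform the substitution, square, and clear denominators (simplifying the radicals or trigonometric terms that enter through the comparison geometry behind Theorem~\ref{iso}). The inequality $\Psi(\delta)^{2}\le 4/\delta$ then becomes a polynomial inequality in $\delta$, and I expect its controlling factor to be the quadratic $64\delta^{2}-64\delta+9$, whose roots are $\tfrac{4\pm\sqrt 7}{8}$: together with $\delta\le 1$ and $\delta>\tfrac14$ this pins down the hypothesis $\delta>\tfrac{4+\sqrt 7}{8}$, and it should reduce to a one-line sign computation. For the equality case, note that when $\tfrac{4+\sqrt 7}{8}<\delta<1$ the polynomial inequality is strict, so $L<\tfrac{2}{\sqrt\delta}D$ and there is nothing to check; and $\delta=1$ forces $K\equiv 1$, i.e.\ the round sphere, for which $L=2\pi=2D=\tfrac{2}{\sqrt\delta}D$. (If the threshold value $\delta=\tfrac{4+\sqrt7}{8}$ needs separate attention one can instead run the usual argument: equality propagates up the chain, forcing equality in Theorem~\ref{iso}, whose rigidity clause forces $g$ round.)

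I do not expect the difficulty to lie in this argument but in its two inputs — Theorem~\ref{iso} and the estimate imported from \cite{recent} — which are taken as given here. Within the present proof the only real care needed is in the polynomial inequality: one wants the \emph{sharp} threshold $\tfrac{4+\sqrt 7}{8}$ rather than a merely sufficient value of $\delta$, and one must check that the two inputs can indeed be applied to the same geometric data so that the substitution is legitimate; beyond that it is bookkeeping and direct computation.
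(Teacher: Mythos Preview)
Your overall strategy---chain the area bound from \cite{recent} with the isoperimetric inequality of Theorem~\ref{iso}---is exactly what the paper does. However, you have misread the shape of the input from \cite{recent}. As quoted in the paper (Theorem~\ref{thm:L2Aineq}), the result is simply $L^2(S^2,g)\le \pi A(S^2,g)$, valid under the hypothesis $\delta>\tfrac{4+\sqrt7}{8}$; the pinching constant appears only as a hypothesis, not in the conclusion. So no normalization, no $\delta$-dependent expression, and no polynomial manipulation are needed: one just writes
\[
L^2 \;\le\; \pi A \;\le\; \frac{4D^2}{\delta},
\]
and the bound $L\le \tfrac{2}{\sqrt\delta}D$ drops out immediately. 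The quadratic $64\delta^2-64\delta+9$ you anticipate is part of the internal argument of \cite{recent}, not something that reappears in this chaining; the threshold $\tfrac{4+\sqrt7}{8}$ is inherited, not rederived.

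This also means your primary equality argument does not work: there is no ``strict polynomial inequality'' for $\delta<1$ to appeal to, since $\Psi(\delta)=2/\sqrt\delta$ on the nose. The correct route is your parenthetical fallback, which is precisely the paper's argument: equality in $L=\tfrac{2}{\sqrt\delta}D$ forces equality in $\pi A=\tfrac{4D^2}{\delta}$, and the rigidity clause of Theorem~\ref{iso} then forces the metric to be round.
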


For $\delta = .83$ our theorem yields the bound $L(S^2,g) \leq 2.19 D(S^2,g)$. The theorem is optimal in the sense that the constant $\frac{2}{\sqrt{\delta}}$ converges to $2$ as $\delta$ approaches $1$. Finally, we note that this theorem can be used to further comment on the Zoll spheres due to Balacheff, Croke, and Katz \cite{bck} where we can now say that $$2D(S^2,Zoll) < L(S^2, Zoll) < \frac{2}{\sqrt{\delta}}D(S^2,Zoll).$$

The paper proceeds as follows. In Section~\ref{rot} we present a proof due to Rotman \cite{video} of the fact that $L(S^2,g) \leq 4D(S^2,g)$. A crucial step in the proof uses a weighted length shortening to avoid stationary theta-graphs (critical points of the length functional on nets). This weighted flow increases the bound on the shortest closed geodesic from $3D(S^2,g)$ to $4D(S^2,g)$. In Section~\ref{deets} we show for positive curvature metrics on the 2-sphere that non-trivial stationary theta-graphs are never local minima. The weighted flow in Rotman's proof can therefore be avoided, and Theorem~\ref{main} follows. In Section~\ref{pinch} we prove a new isoperimetric inequality for pinched metrics on the 2-sphere and combine this with the main result of \cite{recent} to prove Theorem~\ref{pinched}. The new isoperimetric inequality is further refined in the Appendix by studying a Sturm-Liouville problem (SLP) related to lower bounds on the first eigenvalue of the Laplace-Beltrami operator.

\textbf{Acknowledgements:}~The authors would like to recognize Christina Sormani and her NSF grant DMS-1612049. The grant made possible a conference at Yale where the authors learned of many of the results cited in this paper and had productive conversations with Alexander Nabutovsky, Regina Rotman, and Frank Morgan. The authors would also like to thank Stéphane Sabourau for helpful suggestions, as well as Ben Andrews for his helpful advice on how to linearly approximate eigenvalues of the related SLP. We also acknowledge Wolfgang Ziller for helpful discussions about short closed geodesics in the pinched curvature setting, the fruits of which will appear in a forthcoming paper. Finally, we want to acknowledge the generous contribution of the referee who indicated how our proof of the inequality $L(S^2,g) \leq 3D(S^2,g)$ could be extended to the sharp bound $L(S^2,g) \leq 3R(S^2,g)$.

\section{Proof that $L(S^2,g) \leq 4D(S^2,g)$}\label{rot}

We start with some preliminaries that will be used both in the proof of the bound $L(S^2,g) \leq 4D(S^2,g)$ and throughout the remainder of the paper. 

\begin{definition} 
A \emph{geodesic net} is a finite graph immersed in a Riemannian manifold such that each edge is a geodesic segment. A geodesic net is said to be \emph{stationary} if at each vertex the sum of the unit vectors tangent to the incident edges equals zero.
\end{definition}

As such, stationary geodesic nets are critical points of the length functional on the space of nets. Closed geodesics are the first examples of stationary geodesic nets. A figure eight curve is a stationary geodesic net if each loop is geodesic and if the stationarity condition is satisfied at the vertex. In dimension two the stationarity condition implies that a geodesic net based on the figure eight curve will be a self-intersecting closed geodesic. 

An example of a stationary geodesic net that is not a closed geodesic is the stationary theta-graph. A theta-graph is a net consisting of exactly two vertices joined by exactly three edges. The stationarity condition ensures that these edges pairwise meet at angle $\frac{2\pi}{3}$ at each vertex. Hass and Morgan \cite{morgan} gave one of the only known existence results for geodesic nets, demonstrating that convex metrics on the 2-sphere nearby the round metric admit stationary theta-graphs. 

Nabutovsky and Rotman \cite{NR2002} and independently Sabourau \cite{Sab} gave the original proofs that $L(S^2,g) \leq 4D(S^2,g)$. In working to improve this bound Rotman obtained alternate unpublished proofs, one of which we present here \cite{NR2005, rot2, video, rot3}. This proof uses a pseudo-filling technique, analogous to the technique introduced by Gromov \cite{Gro83} in proving bounds for essential manifolds on the length of the shortest closed geodesic in terms of volume.

\begin{theorem}[\cite{NR2002, Sab}]\label{4d} Riemannian 2-spheres have $L(S^2,g) \leq 4D(S^2,g)$. 
\end{theorem}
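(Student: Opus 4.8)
\noindent\emph{Proof proposal.} The plan is a sweepout (min--max) argument in the space of $1$-cycles $\mathcal Z_1(S^2;\mathbb Z_2)$, combined with a Gromov-style pseudo-filling to keep the sweepout short, in analogy with Gromov's bound for the shortest closed geodesic on essential manifolds in terms of volume.

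First I would fix a point $p\in S^2$, set $\rho=\max_{x}d(x,p)\le D$, and use the level sets of the distance function $d(\cdot,p)$ to build a sweepout of $S^2$: the sub-level sets $B(p,t)$ for $t\in[0,\rho]$ interpolate between a point and all of $S^2$, so the boundary $1$-cycles $\partial B(p,t)$ form a continuous family that starts and ends at the zero cycle and sweeps out the fundamental class $[S^2]$; in particular the sweepout is homologically essential. I would then \emph{refine} this family — the pseudo-filling step — so that each cycle in the refined sweepout is built from minimizing geodesic segments between pairs of points (each of length $\le D$) together with short connecting arcs; a careful accounting along the lines of Gromov's volume estimates should show that every cycle in the refined family has length at most $3D$.

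Consequently the min--max width $W$ of this sweepout satisfies $0<W\le 3D$, and in dimension two the min--max / length-shortening machinery (Almgren--Pitts theory, or a Birkhoff-type flow on nets) realizes $W$ as the length of a nontrivial stationary geodesic net $N$ with $\mathrm{length}(N)\le W\le 3D$. A stationary geodesic net in dimension two is a union of closed geodesics meeting at balanced vertices; if $N$ is an immersed closed geodesic, or a figure-eight — which by the balancing condition is a self-intersecting closed geodesic — then $L(S^2,g)\le\mathrm{length}(N)\le 3D$ and we would in fact obtain the stronger bound $3D$.

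The remaining and, I expect, hardest case is that $N$ is (or contains) a stationary theta-graph, whose three strands meet at angle $\tfrac{2\pi}{3}$ at each vertex; this is a genuine critical point of the length functional on nets but is not a closed geodesic. To exclude it I would replace the plain length-shortening flow by a \emph{weighted} one, assigning weight $2$ to a distinguished strand (of length $\le D$) of each cycle and weight $1$ to the rest: for a weighted-stationary vertex the balancing equation $\vec v_1+\vec v_2+2\vec v_3=\vec 0$ with unit vectors forces $\vec v_1=\vec v_2=-\vec v_3$, so the configuration degenerates and no honest weighted-stationary theta-graph exists — hence the weighted min--max must output an honest closed geodesic. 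Introducing the weight adds at most $D$ to the (weighted) length of each cycle, so the weighted width is bounded by $3D+D=4D$, giving $L(S^2,g)\le 4D$. The main obstacle is precisely this last step: arranging the weighting consistently along the whole family, verifying that it genuinely rules out theta-graphs while still detecting a closed geodesic, and checking that it costs only the additive $D$. This is exactly the step that the present paper circumvents in nonnegative curvature, where stationary theta-graphs can be ruled out directly, recovering $L(S^2,g)\le 3R\le 3D$.
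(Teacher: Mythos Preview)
Your proposal captures the essential mechanism of the paper's argument: the obstruction to a $3D$ bound is a stationary theta-graph, and weighted length-shortening (doubling one strand, so that the balancing condition $v_1+v_2+2v_3=0$ admits no nondegenerate solution in unit vectors) destabilizes it at the cost of an extra $D$. You also correctly identify this as exactly the step that positive curvature obviates. So the core idea matches. The technical vehicle, however, is different and carries a real gap. The paper does \emph{not} run a one-parameter min--max on $\mathcal{Z}_1(S^2;\mathbb{Z}_2)$ built from distance level sets; it attempts to extend a diffeomorphism $f\colon(S^2,\mathrm{std})\to M$ to a map $\tilde f\colon(D^3,\mathrm{std})\to M$ over a cone triangulation. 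One chooses a point $\tilde p\in M$, connects it by minimizing geodesics (each of length $\le D$) to the vertices of a fine triangulation of $M$, and tries to fill the resulting $2$- and $3$-simplices via Birkhoff shortening and a length-shortening flow on nets. Theta-graphs then arise \emph{automatically} as triples of such edges emanating from $\tilde p$, so the bound $3D+\delta$ on their total length is free --- no refinement of a sweepout is needed.

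In your outline that refinement is the whole difficulty, and invoking ``Gromov's volume estimates'' does not supply it: those bound length by volume on essential manifolds, not by diameter on a simply connected sphere, and the level sets $\partial B(p,t)$ carry no a priori length control. There is also a category mismatch. A theta-graph has odd degree at each vertex and is therefore not a $\mathbb{Z}_2$ $1$-cycle, so it is not the natural output of a min--max in $\mathcal{Z}_1(S^2;\mathbb{Z}_2)$; the paper's theta-graphs live in the space of \emph{nets}, and the relevant flow is length-shortening on nets, not Almgren--Pitts on flat cycles. To turn your sketch into a proof you should replace the level-set sweepout by the cone-triangulation extension argument --- that is precisely what delivers the $3D$ bound on the relevant nets, the clean three-case analysis (degeneration to a periodic geodesic, collapse to a figure-eight, or a genuine stationary theta-graph), and a coherent place to insert the weighting on a single designated edge.
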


\begin{proof}[Proof \cite{video}.]
Let $M=(S^2,g)$ be a Riemannian 2-sphere and $f \colon (S^2, std) \to M$ a diffeomorphism. We attempt to extend $f$ to a map $\tilde{f} \colon (D^3, std) \to M$. As $M$ is a 2-sphere such a map should not exist, and as an obstruction to this extension we obtain a periodic geodesic on $M$ with length $\leq 4D(M)$. 

First triangulate $(S^2, std)$ such that the diameter of the triangulation on $M$ induced by $f$ is less than $\delta$. Next triangulate $(D^3, std)$ as a cone over the triangulated $(S^2, std)$, i.e.~add a single vertex $p \in D^3$ at the center of the ball and the corresponding 1, 2, and 3-simplexes. We attempt to extend the map $f$ inductive to this skeleton.

{\bf 0-skeleton}:~We need only choose a point $\tilde{p} \in M$ with $\tilde{f}(p) = \tilde{p}$. 

{\bf 1-skeleton}:~Let $v_i $ be the vertices of the triangulation of $S^2$ and $f(v_i)=\tilde{v}_i$ the corresponding vertices of the induced triangulation of $M$. We send the 1-simplex between $p$ and $v_i$ on $D^3$ to a minimizing geodesic between $\tilde{p}$ and $\tilde{v}_i$ on $M$ with length less than the diameter of $M$. 

{\bf 2-skeleton}:~We attempt to send the 2-simplex on $D^3$ associated to the triple $(p, v_i, v_j)$ to a 2-simplex on $M$ associated to the triple $(\tilde{p}, \tilde{v}_i, \tilde{v}_j)$. The triple on $M$ is already connected by 1-simplexes that form a piecewise smooth closed curve with length less than $2D(M)+\delta$. We use Birkhoff curve shortening process to deform this closed curve without increasing its length, either to a closed geodesic with length less than $2D(M)+\delta$, or to a point in which case we have swept out the desired 2-simplex.

{\bf 3-skeleton}:~We attempt to send the 3-simplex on $D^3$ associated to the tuple $(p, v_i, v_j, v_k)$ to a 3-simplex on $M$ associated to the tuple $(\tilde{p}, \tilde{v}_i, \tilde{v}_j , \tilde{v}_k)$. By the previous steps we know where the boundary of this 3-simplex is sent; call this boundary 2-sphere $S^2_0 \subset M$. If we are able to contract $S^2_0$ to a point, i.e.~construct a homotopy $S^2_t$ with $S^2_1 = \{x\}$, then we will have succeeded in sending 3-complexes to 3-complexes, thus extending the map $f$ to $\tilde{f}$. Such an extension is not possible, and as an obstruction we obtain a short periodic geodesic on $M$. 

We first contract the small 2-simplex associated to the triple $(\tilde{v}_i, \tilde{v}_j , \tilde{v}_k)$ to a point which we call $\tilde{v} \in M$ (c.f.~\cite{rot3}, Remark ending Section 1). We then have a theta-graph between the pair of points $(\tilde{p}, \tilde{v})$ consisting of three 1-simplexes, which we call $e_1, e_2, e_3$. As before, the Birkhoff curve shortening process on each pair of 1-simplexes $\{ e_i , e_j \}$ yields the boundary 2-sphere $S^2_0$. 

In order to construct the homotopy $S^2_t$ we first use length shortening flow for nets to deform the theta-graph to a point, c.f.~\cite[Section 3]{NR2005}. At each time in this deformation we apply the Birkhoff curve shortening process to each pair of edges, sweeping out the desired $S^2$. The continuity of the Birkhoff curve shortening process (with respect to the initial pair $\{ e_i , e_j \}$ of edges) in the absence of short closed geodesics is what allows us to extend the homotopy which contracts the theta-graph to the desired homotopy $S^2_t$.

We therefore need only study the situation in which the theta-graph gets stuck on a stationary geodesic net before contracting to a point during the length shortening process. There are three cases to consider:

Case 1: The theta-graph degenerates to a periodic geodesic; this geodesic will have length less than $3D(M)$.

Case 2: One of the edges disappears during the length shortening yielding a stationary figure eight with length less than $3D(M)$. The stationarity condition in dimension two implies that this is a (self-intersecting) periodic geodesic. 

Case 3: The theta-graph gets stuck on a stationary theta-graph. In this situation we apply a weighted length shortening process. Let $(w_1, w_2, w_3)$ be the triple of unit direction vectors at a vertex of a theta-graph. We consider a weighted length shortening flow where we double the weight of the third vector. The stationarity condition is then $| w_1 + w_2 + 2w_3 | = 0$ which implies that stationary theta-graphs are not critical points of the the weighted flow. Critical points occur when $w_1$ and $w_2$ collapse to a single edge or one of these edges disappears, which means we are in one of the two previous cases. Because we doubled the weight of one of the edges we now produce a (potentially self-intersecting) periodic geodesic with length bounded above by $4D(M)$. 
\end{proof}

\section{Positive metrics on the 2-sphere} \label{deets}

In this section we indicate how the proof of Theorem~\ref{4d} adapts in the positive curvature setting to yield our Theorem~\ref{main}. Under the positive curvature assumption we show that stationary theta-graphs are never local minima of the length functional on nets, allowing us to avoid weighted length shortening. Once we prove the theorem in the positive setting, we show how it extends to the non-negative setting by considering conformally close positive metrics. 
We begin by recalling the first and second variations of length, which can be found for instance in \cite[Section 5.1]{Jost}, see also \cite{morgan2} and \cite{morgan3} for formula that apply more directly in the setting of stationary nets.

\begin{proposition}[First variation of length, Lemma 5.1.1 \cite{Jost}]\label{1stvar} Given a smooth curve $\gamma:[a,b]\rightarrow M$ parametrized by arc-length and a vector field $V$ on $\gamma$, let $H$ be a variation of $\gamma$ in the direction of $V$ so that $H:[a,b]_t\times[-\epsilon,\epsilon]_s \rightarrow M$ is smooth, $H(\cdot,0)=\gamma,\, \frac{dH}{ds}|_{s=0}(\cdot,0)=V$. If we denote by $L(s):= \ell(H(\cdot,s))$ then
\[L'(0) = \langle V, \gamma'\rangle|_a^b - \int_a^b \langle V(t),\nabla_{\gamma'}\gamma'(t) \rangle dt
\]

\end{proposition}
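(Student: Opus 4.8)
The plan is to treat this as a direct computation on the energy/length integral: differentiate the length functional under the integral sign, use the symmetry of mixed covariant derivatives coming from torsion-freeness of the Levi-Civita connection, and then integrate by parts. Concretely, I would write $L(s) = \int_a^b \langle \partial_t H(t,s), \partial_t H(t,s)\rangle^{1/2}\,dt$, where $\partial_t H = dH(\partial_t)$ and $\partial_s H = dH(\partial_s)$ denote the images of the coordinate vector fields on $[a,b]\times[-\epsilon,\epsilon]$, and let $\nabla_t,\nabla_s$ be the induced covariant derivatives along the $t$- and $s$-curves. Since $\gamma$ is parametrized by arc length it is in particular regular, so $\partial_t H$ is nonzero along $s=0$; after shrinking $\epsilon$ if necessary the integrand is smooth and bounded away from zero on the compact rectangle, which justifies differentiating under the integral sign.

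Differentiating and using compatibility of $\nabla$ with the metric gives
\[
L'(s) = \int_a^b \frac{\langle \nabla_s \partial_t H,\, \partial_t H\rangle}{|\partial_t H|}\,dt .
\]
The one genuinely geometric input is the symmetry lemma $\nabla_s \partial_t H = \nabla_t \partial_s H$, valid because $\nabla$ is torsion-free and $[\partial_s,\partial_t]=0$. Evaluating at $s=0$, where $\partial_t H(\cdot,0)=\gamma'$ has unit length and $\partial_s H(\cdot,0)=V$, we obtain $L'(0) = \int_a^b \langle \nabla_{\gamma'} V, \gamma'\rangle\,dt$.

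Finally, metric compatibility again yields $\langle \nabla_{\gamma'} V, \gamma'\rangle = \frac{d}{dt}\langle V,\gamma'\rangle - \langle V, \nabla_{\gamma'}\gamma'\rangle$, and integrating over $[a,b]$ produces the boundary term $\langle V,\gamma'\rangle\big|_a^b$ together with the claimed integral term; the boundary term survives precisely because $V$ is not assumed to vanish at the endpoints. The only steps needing real care are the interchange of derivative and integral — routine, given the smoothness and compactness established above — and the symmetry lemma $\nabla_s\partial_t H = \nabla_t \partial_s H$, which is the single place where the structure of the connection enters; everything else is calculus and the defining properties of the Levi-Civita connection.
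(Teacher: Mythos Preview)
Your proof is correct and is exactly the standard argument: differentiate under the integral, invoke the symmetry lemma $\nabla_s\partial_t H=\nabla_t\partial_s H$ from torsion-freeness, use unit speed at $s=0$, and integrate by parts via metric compatibility. The paper itself does not supply a proof of this proposition at all---it simply quotes the formula from \cite[Lemma~5.1.1]{Jost}---so there is nothing to compare against beyond noting that your argument is the one found in that reference.
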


\begin{proposition}[Second variation of length, Theorem 5.1.1 \cite{Jost}]\label{2ndvar} Given a smooth geodesic $\gamma:[a,b]\rightarrow M$ parametrized by arc-length and a vector field $V$ on $\gamma$, let $H$ be a variation of $\gamma$ in the direction of $V$ so that $H:[a,b]_t\times[-\epsilon,\epsilon]_s \rightarrow M$ is smooth, $H(\cdot,0)=\gamma,\, \frac{d}{ds}|_{s=0}H(\cdot,0)=V$. If we denote by $V^\perp$ the perpendicular projection of $V$ with respect to $\gamma'$ and by $L(s):= \ell(H(\cdot,s))$ then
\[L''(0) = \biggl\langle\frac{D}{ds}\frac{dH}{ds}, \gamma'\biggr\rangle\bigg|_{(a,0)}^{(b,0)} + \int_a^b \Vert \nabla_{\gamma'}V^\perp(t)\Vert^2 - \langle R(V^\perp(t),\gamma'(t))\gamma'(t), V^\perp(t) \rangle dt
\]

\end{proposition}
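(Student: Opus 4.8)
The plan is to differentiate the length functional
\[
L(s) = \int_a^b \Bigl\langle \tfrac{\partial H}{\partial t}, \tfrac{\partial H}{\partial t}\Bigr\rangle^{1/2}\, dt
\]
twice under the integral sign and then reorganize the result using the geodesic equation, the first variation formula, and an orthogonal splitting of $V$. First I would compute
\[
L'(s) = \int_a^b \frac{\bigl\langle \tfrac{D}{\partial s}\tfrac{\partial H}{\partial t}, \tfrac{\partial H}{\partial t}\bigr\rangle}{\bigl|\tfrac{\partial H}{\partial t}\bigr|}\, dt,
\]
and differentiate once more; this produces a term carrying a second covariant $s$-derivative of $\partial H/\partial t$, a term $\bigl|\tfrac{D}{\partial s}\tfrac{\partial H}{\partial t}\bigr|^2$, and a term $\bigl\langle \tfrac{D}{\partial s}\tfrac{\partial H}{\partial t}, \tfrac{\partial H}{\partial t}\bigr\rangle^2$ coming from differentiating the denominator.

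Next I would specialize to $s=0$, where $\partial H/\partial t = \gamma'$ is a unit field and $\nabla_{\gamma'}\gamma' = 0$ since $\gamma$ is a geodesic. The two identities that do the real work are the symmetry $\tfrac{D}{\partial s}\tfrac{\partial H}{\partial t} = \tfrac{D}{\partial t}\tfrac{\partial H}{\partial s}$, coming from the connection being torsion-free, and the curvature identity $\tfrac{D}{\partial s}\tfrac{D}{\partial t}\tfrac{\partial H}{\partial s} - \tfrac{D}{\partial t}\tfrac{D}{\partial s}\tfrac{\partial H}{\partial s} = R\bigl(\tfrac{\partial H}{\partial s}, \tfrac{\partial H}{\partial t}\bigr)\tfrac{\partial H}{\partial s}$. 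Applying both at $s=0$ rewrites the second-derivative term as $\tfrac{d}{dt}\langle A, \gamma'\rangle + \langle R(V,\gamma')V, \gamma'\rangle$, where $A := \tfrac{D}{\partial s}\tfrac{dH}{ds}\big|_{s=0}$ and the geodesic equation is used to convert $\langle \nabla_{\gamma'}A, \gamma'\rangle$ into a total $t$-derivative; integrating that total derivative yields exactly the boundary term in the statement.

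Finally I would decompose $V = f\gamma' + V^\perp$ with $f = \langle V, \gamma'\rangle$, and use that $\langle \nabla_{\gamma'}V^\perp, \gamma'\rangle = 0$ (differentiate $\langle V^\perp, \gamma'\rangle = 0$) to get $\langle \nabla_{\gamma'}V, \gamma'\rangle = f'$ and hence $\|\nabla_{\gamma'}V\|^2 - \langle \nabla_{\gamma'}V, \gamma'\rangle^2 = \|\nabla_{\gamma'}V^\perp\|^2$, so the tangential part drops out of the integrand. Likewise, the antisymmetries of $R$ kill the $f\gamma'$-pieces and reduce $\langle R(V,\gamma')V, \gamma'\rangle$ to $-\langle R(V^\perp,\gamma')\gamma', V^\perp\rangle$. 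Assembling the surviving pieces gives the claimed formula.

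The step most prone to error is the extraction of the curvature term: one must commute $D/\partial s$ past $D/\partial t$ in the right order and with the right sign in the curvature identity, and then apply the symmetries of $R$ faithfully, since any slip there propagates verbatim into the final expression. Otherwise the argument is pure bookkeeping. Since this is Theorem 5.1.1 of \cite{Jost}, one could of course simply cite it; the sketch above is the standard textbook derivation.
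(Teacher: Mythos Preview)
Your sketch is correct and is the standard derivation one finds in Jost. Note, however, that the paper does not prove this proposition at all: it is stated with a citation to \cite[Theorem 5.1.1]{Jost} and used as a black box in the proof of Lemma~\ref{lem}, so there is nothing to compare against beyond the textbook reference you already identified.
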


\begin{lemma}\label{lem} Any stationary theta-graph on a positively curved 2-sphere admits directions of decrease (within the space of nets) for the length shortening flow.
\end{lemma}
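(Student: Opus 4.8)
The plan is to study the second variation of length, on the space of nets, at a stationary theta-graph $\theta = e_1\cup e_2\cup e_3$ with vertices $p,q$, and to produce a net variation along which the length strictly decreases (i.e.\ to show $\theta$ is not a local minimum of length on nets). Since $\theta$ is stationary the first variation vanishes for every net variation, so the point is to make the Hessian negative in some direction. A net variation is encoded by vertex velocities $W\in T_pS^2$, $Z\in T_qS^2$ together with variation fields along the three edges agreeing with $W$ at $p$ and with $Z$ at $q$. If we let each vertex move along a fixed geodesic of $M$ — so the endpoint term $\langle \tfrac{D}{ds}\tfrac{dH}{ds},\gamma'\rangle$ in Proposition~\ref{2ndvar} vanishes — and let each $e_i$ deform through geodesics, the total second variation becomes the sum of the index forms of $e_1,e_2,e_3$ evaluated on the normal parts of the three Jacobi fields carrying the prescribed endpoint data. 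It therefore suffices to exhibit endpoint data making this sum negative.

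First dispose of the degenerate case. If some edge $e_i$ has a conjugate point of $p$ in the half-open arc $(p,q]$ — in particular if $\ell(e_i)$ is at least the conjugate radius along $e_i$, which occurs for instance on the round sphere where every edge has length $\pi$ — then the index form of $e_i$ is already indefinite (with endpoints fixed when the conjugate point is interior, or after an arbitrarily small normal displacement of $q$ when it sits at $q$), while the other two edges contribute only a bounded second-order term; hence the total second variation is unbounded below and $\theta$ is not a local minimum. No curvature hypothesis beyond the presence of the conjugate point is needed here.

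Now assume no edge reaches a conjugate point, so the normal Jacobi equations along the $e_i$ are nondegenerate. Consider the two-parameter family of nets obtained by sliding $p$ a distance $s$ and $q$ a distance $t$ inward along the edge $e_3$, keeping $e_1$ and $e_2$ geodesic; here $\ell(e_3)$ equals its original value minus $s+t$ and contributes nothing to the Hessian. The endpoint data transmitted to $e_1$ and $e_2$ are the normal components of the unit tangents of $e_3$ at $p$ and $q$, each of norm $\sin\tfrac{2\pi}{3}=\tfrac{\sqrt3}{2}$ by the stationarity angle condition. Writing the Hessian of total length at $(0,0)$ as a symmetric $2\times 2$ matrix $H$ and computing with these Jacobi fields, one finds that $H_{11}$ and $H_{22}$ are $\tfrac34$ times sums of ratios $C_i(\ell_i)/S_i(\ell_i)$, respectively $S_i'(\ell_i)/S_i(\ell_i)$, where $C_i,S_i$ are the normal Jacobi solutions along $e_i$ with $(C_i,C_i')(0)=(1,0)$ and $(S_i,S_i')(0)=(0,1)$, while $H_{12}=-\tfrac34\big(S_1(\ell_1)^{-1}+S_2(\ell_2)^{-1}\big)$. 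Positive curvature forces the strict inequalities $C_i(\ell_i)<1$ and $S_i'(\ell_i)<1$ — both quantities equal $1$ at $t=0$ and, since $C_i''=-KC_i$ and $S_i''=-KS_i$, remain strictly below $1$ up to the first conjugate point of $e_i$, which by assumption lies beyond $q$ — and also $S_i(\ell_i)>0$. Setting $M=S_1(\ell_1)^{-1}+S_2(\ell_2)^{-1}>0$ we obtain $\tfrac43 H_{11}<M$ and $\tfrac43 H_{22}<M$, which forces $\det H<0$: if either diagonal entry is nonpositive this is immediate from $H_{12}\neq 0$, and otherwise it follows from $H_{11}H_{22}<\tfrac{9}{16}M^2=H_{12}^2$. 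Hence $H$ has a negative eigenvalue, and moving the net in the corresponding direction strictly decreases its length.

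The step I expect to need the most care — and the main obstacle — is the sign of the off-diagonal entry $H_{12}$: one gets $\det H<0$ only because $e_1$ and $e_2$ contribute mixed second derivatives of the same sign, and verifying this means tracking the combinatorics of the theta-graph (the cyclic orders of the three edges around $p$ and around $q$ are reversed) together with the parallel transport along $e_1$ and $e_2$ of the normal frames at the two vertices. A related subtlety is that the bound is sharp precisely at the round metric, where $S_i(\ell_i)\to 0$; one genuinely needs $K>0$ rather than merely $K\ge 0$ to secure the strict inequalities above, and the degenerate sub-case must be treated separately as in the second paragraph.
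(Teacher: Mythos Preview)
Your approach is correct in outline and the sign issue you flag does resolve in your favour: for an embedded theta-graph on an oriented surface the cyclic orders of the edges at $p$ and at $q$ are reversed, and since in dimension two the normal $\dot\gamma_i^\perp$ (rotate by $+\pi/2$) is parallel along each $e_i$, a direct computation in local frames gives $a_1b_1=a_2b_2=\tfrac34>0$, so $H_{12}=-\tfrac34 M$ as you claim. One minor imprecision: when both diagonal entries are negative you do not literally get $\det H<0$, but you still get a negative eigenvalue, which is all you need.

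That said, the paper's argument is considerably shorter and avoids both your case split and the Jacobi-field bookkeeping. It simply writes down one explicit variation: take parallel unit normal fields along $e_1$ and $e_2$ (with opposite signs so that they match at the vertices) and zero normal field along $e_3$, adjoining tangential components so that the three fields agree at $p$ and at $q$; then exponentiate. The boundary term $\langle \tfrac{D}{ds}\tfrac{dH}{ds},\gamma'\rangle$ vanishes because the variation is by exponentials, and the integrand in the second variation reduces to $-K|V_i^\perp|^2<0$ on $e_1,e_2$ and $0$ on $e_3$; hence $L''(0)<0$ in one line, uniformly, with no conjugate-point hypothesis and no $2\times2$ Hessian.

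The trade-off: the paper's variation leaves the space of \emph{geodesic} theta-graphs (the deformed edges are exponential images, not geodesics), whereas your two-parameter family stays inside it; if one cared about that restriction your argument would be the right one. For the purpose at hand---showing the stationary theta-graph is not a local minimum among nets---the paper's approach is the more economical route.
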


\begin{proof}
We simply demonstrate a variation with negative second variation of length. Give each edge a unit speed parametrization $\gamma_i \colon [a_i,b_i] \to (S^2,g)$ and define  vector fields $V_i$ so that $V_1^\perp$ and $V_2^\perp$ are of constant size $1$ (hence parallel), $V_3^\perp\equiv0$, and the $V_i$ all agree at the vertices of the theta-graph. For example: \\

$V_1(t) = \frac{1}{\sqrt{3}}\cos{(\frac{t-a_1}{b_1-a_1}\pi)}\dot{\gamma}_1+1\dot{\gamma}_1^{\perp}$ \\

$V_2(t) = \frac{1}{\sqrt{3}}\cos{(\frac{t-a_2}{b_2-a_2}\pi)}\dot{\gamma}_2-1\dot{\gamma}_2^{\perp}$ \\

$V_3(t) = \frac{-2}{\sqrt{3}}\cos{(\frac{t-a_3}{b_3-a_3}\pi)}\dot{\gamma}_3+0\dot{\gamma}_3^{\perp}$ \\

For the given variational fields $V_i$, we choose variations $H_i(\cdot,s)$ which agree at the vertices; for example, one could set $H_i(t,s)=\exp_{\gamma_i(t)}sV_i(t)$. The fact that the $H_i(\cdot,s)$ agree at the vertices ensures that we are deforming through theta-graphs. Moreover, as the variations keep each edge embedded, and maintain angles close to the initial $\frac{2\pi}{3}$ angles, we are guaranteed the theta-graphs remain embedded during the deformation.

If we denote by $L(s)$ the sum of lengths of $H_1(\cdot,s), H_2(\cdot,s), H_3(\cdot,s)$, then by the first variation formula (Proposition \ref{1stvar}) we have that

\begin{equation}\label{eq:length} L'(0)= \sum_{i=1}^3\langle V_i, \gamma_i'\rangle|_{a_i}^{b_i},
\end{equation}\noindent
since $\lbrace\gamma_i\rbrace_{i=1}^3$ are geodesics. And because the vector fields $\lbrace V_i\rbrace_{i=1}^3$ agree at the endpoints and the geodesics meet at angles $\frac{2\pi}{3}$, the summands in Equation~\ref{eq:length} cancel out for each vertex. Hence $L'(0)=0$.

For the second variation, note that because $\lbrace V^\perp_i\rbrace_{i=1}^3$ are parallel and $M$ has positive curvature we have that for $1\leq i\leq3$

\[\int_{a_i}^{b_i} \Vert \nabla_{\gamma_i'}V_i^\perp(t)\Vert^2 - \langle R(V_i^\perp(t),\gamma_i'(t))\gamma_i'(t), V^\perp_i(t) \rangle dt < 0
\]

Applying the second variation formula (Proposition \ref{2ndvar}) we have then

\begin{equation}\label{eq:2ndvar}
L''(0)< \sum_{i=1}^3\biggl\langle\frac{D}{ds}\frac{dH_i}{ds}, \gamma_i'\biggr\rangle\bigg|_{(a_i,0)}^{(b_i,0)}
\end{equation}

Given our choice of $H_i(t,s)=\exp_{\gamma_i(t)}sV_i(t)$ we see that $\frac{D}{ds}\frac{dH_i}{ds}=0$ for each $i \in \{1,2,3\}$ and therefore that the right hand side of Equation~\ref{eq:2ndvar} vanishes. Thus under the described variation we have $L''(0)<0$, which together with $L'(0)=0$, implies that length decreases for small values of $s$ in $\lbrace H_i(\cdot,s)\rbrace_{i=1}^3$.
\end{proof}

We show by example that Lemma~\ref{lem} is sharp in the sense that there exist stationary theta-graphs on non-negatively curved 2-spheres which do not admit directions of decrease. Consider the metric space formed by gluing two equilateral triangles along their common boundaries, so that a geodesic on the top face billiards around an edge to the bottom face. This \emph{doubled triangle} is a 2-sphere with flat metric and three conical singularities; it is sometimes called the Calabi-Croke sphere \cite{Sab2}.

The doubled triangle admits a degenerate stationary theta-graph. Connect the center of the top face with the center of the bottom face via three geodesic segments which pass perpendicularly through each edge of the triangular boundary (see Figure~\ref{graph}). Such an arrangement ensures that the edges of the graph meet at the vertices at angle $2\pi /3$. By moving the vertices of the graph towards one of the vertices of the triangle, and keeping the edges of the graph perpendicular to the edges of the triangle, one produces a degenerate family of stationary theta-graphs, all having the same total length, and failing to admit directions of decrease. 

\begin{figure}[h]
  \includegraphics[scale=.25]{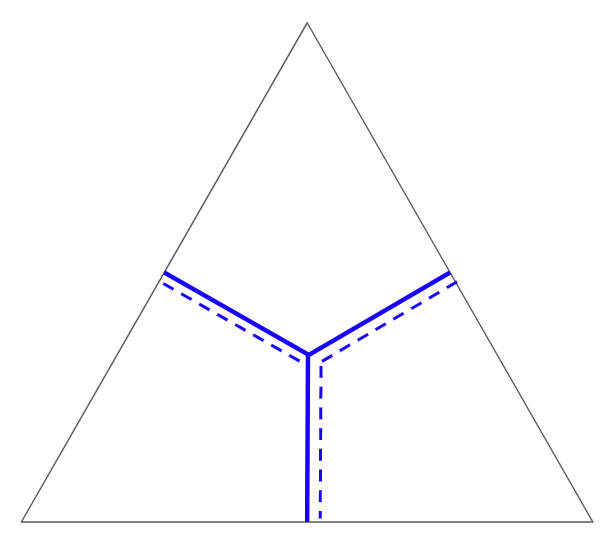}
  \centering
  \caption{A degenerate theta-graph on the doubled triangle, where edges on the top face are solid and edges on the bottom face are dashed.}
  \label{graph}
\end{figure}

Because these theta-graphs on the doubled triangle avoid the vertices of the triangle, this degenerate family also exists on smooth non-negative 2-sphere metrics close to the doubled triangle metric. This example illustrates that the proof of Theorem~\ref{main} in the non-negative setting can not rely directly on Lemma~\ref{lem}, and instead will follow by first proving the theorem in the positive setting, and then extending by considering conformally close positive metrics.

\begin{proof}[Proof of Theorem~\ref{main}]

Let us assume first $M = (S^2, g)$ to be a metric of positive curvature. We follow the proof of Theorem~\ref{4d}, trying to extend the map $f \colon (S^2, std) \to M$ to a map $\tilde{f} \colon (D^3, std) \to M$. When extending the 0-skeleton we now choose a point $\tilde{p} \in M$ that realizes the radius, i.e.~such that $B(\tilde{p},R)$ covers $M$. This choice ensure that $\tilde{p}$ is at distance at most $R(M)$ from the points in the triangulation; it is the same choice that Rotman \cite{rot5} makes when improving her bound from $4D(S^2, g)$ to $4R(S^2, g)$. 

Next we follow the proof of Theorem~\ref{4d} until Case 3 where the theta-graph gets stuck on a stationary theta-graph, a critical point of the length shortening flow for nets. By Lemma~\ref{lem} this critical point admits a direction of decrease, allowing us to continue the contraction past the stationary theta-graph, and eliminating the necessity of the weighted length shortening. 

While we are required to make a choice about the direction we deform from a stationary theta-graph, this choice can be made independently in each $3$-cell; we do not expect this extended (restarted) flow to depend continuously on the initial theta-graph. Indeed, we only need the fact \cite[Lemma 4]{NR2005} that the space of theta-graphs with length less than $3R(M)+\delta$ is connected in order to contract the initial theta-graph to a point. The continuity of the Birkoff curve shortening process (with respect to an initial pair of edges) in the absence of short closed geodesics allows us to extend this contraction to the entire 3-cell.

.

Upon resuming the flow, it is possible that we encounter another stationary theta-graph. Note that this stationary theta-graph will be distinct from the first, as the length shortening flow is strictly decreasing (preventing us from visiting the same sequence of theta-graphs repeatedly). It is possible running the flow in this extended (restarted) manner that we encounter a sequence of stationary theta-graphs accumulating to some limit object. In this case an application of transfinite induction ensures that this limit object is again a stationary theta-graph from which we can continue the flow.

As it is impossible to contract all theta-graphs to a point (this would extend the map $f \colon (S^2, std) \to M$ to a map $\tilde{f} \colon (D^3, std) \to M$) we must end in Case 1 or 2. By avoiding the weighted length shortening  we produce a (potentially self-intersecting) closed geodesic with length bounded above by $3R(M) \leq 3D(M)$.

The theorem is thus proved in the positive case. In the case where the sphere is non-negatively curved, we proceed as follows. Choose a smooth function $\varphi \colon S^2 \to \mathbb{R}$ such that $\Delta\varphi < 0$ on the set $K_{0} = \lbrace x\in S^2 \,|\, K_x=0 \rbrace$. The existence of such a $\varphi$ is possible because $K_0$ is a proper subset of the sphere (via Gauss-Bonnet). We consider the metrics $g_t = e^{2t\varphi}g$ which have strictly positive curvature $K_t = e^{-2t\varphi}(K-t\Delta\varphi)$ for $t>0$ small. Applying the result for positively curved metrics to $(S^2,g_t)$, and letting $t\rightarrow 0^+$, we use that the convergence $(S^2,g_t)\rightarrow(S^2,g)$ is smooth  and that $L(S^2,.)$ is lower-semicontinuous (this last property follows from the fact that in smooth convergence closed geodesics converge to closed geodesics) to conclude that the inequality $L(S^2,g) \leq 3R(S^2,g)$ holds in the non-negative setting.
\end{proof}

Finally, note that in \cite{calabi} it is proved that the shortest closed geodesic on a non-negatively curved $(S^2,g)$ is simple. Therefore, if the obstruction in the above proof yields a self-intersecting periodic geodesic (Case 2), then there also exists a simple closed geodesic with length bounded above by three times the radius. In short, we can use the main result of \cite{calabi} to improve our Theorem~\ref{main} to the following: on non-negatively curved $(S^2,g)$ the shortest closed geodesic is simple and has length bounded above by three times the radius.

\section{Pinched metrics on the 2-sphere}\label{pinch}

The main goal of this section is to prove the following new isoperimetric inequality for pinched metrics on the 2-sphere. A positive metric $(S^2,g)$ is said to be $\delta$-pinched if $K_{\min}/K_{\max}\geq \delta$.

\begin{theorem}\label{iso}
Let $(S^2,g)$ be a $\delta$-pinched metric. Then $$\pi A \leq \frac{4D^2}{\delta}$$ with equality if and only if the sphere is round. 
\end{theorem}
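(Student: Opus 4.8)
The plan is to deduce the inequality from two classical ingredients: the Gauss--Bonnet theorem and Klingenberg's injectivity radius estimate for even-dimensional orientable manifolds of positive curvature. Throughout write $\kappa_- = K_{\min} > 0$ and $\kappa_+ = K_{\max}$, so that the $\delta$-pinching hypothesis reads $\kappa_+ \le \kappa_-/\delta$.

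First I would bound the area from above. Since $K \ge \kappa_-$ pointwise, Gauss--Bonnet gives
\[
4\pi \;=\; \int_{S^2} K \, dA \;\ge\; \kappa_- A ,
\]
so $A \le 4\pi/\kappa_-$.

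Next I would bound the diameter from below. As $(S^2,g)$ is a compact \emph{orientable} surface with $0 < K \le \kappa_+$, Klingenberg's theorem gives $\operatorname{inj}(S^2,g) \ge \pi/\sqrt{\kappa_+}$. On any compact Riemannian manifold the injectivity radius is at most the diameter: at a point where the injectivity radius is attained, the distance to the (nonempty) cut locus is at most $D$. Hence $D \ge \operatorname{inj}(S^2,g) \ge \pi/\sqrt{\kappa_+}$, and feeding in the pinching bound $\kappa_+ \le \kappa_-/\delta$ yields $D^2 \ge \pi^2/\kappa_+ \ge \pi^2 \delta/\kappa_-$. Combining with the area estimate,
\[
\pi A \;\le\; \frac{4\pi^2}{\kappa_-} \;\le\; \frac{4 D^2}{\delta} ,
\]
which is the asserted inequality.

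For the equality statement I would trace the two inequalities back. Equality in $\kappa_- A \le \int_{S^2} K \, dA$ forces $K \equiv \kappa_-$, so $g$ has constant curvature and is the round metric; then $\kappa_+ = \kappa_-$, so the pinching constant is necessarily $\delta = 1$; and the round sphere of curvature $\kappa_-$ does satisfy $D = \pi/\sqrt{\kappa_-}$, so all the intermediate equalities are consistent. Thus equality holds exactly for the round metric (taken with $\delta = 1$), and conversely the round metric realizes it. The argument presents no serious obstacle; the point that genuinely needs care is the appeal to Klingenberg's estimate, which uses both the orientability and the even dimensionality of $S^2$ (the analogous bound $D \ge \pi/\sqrt{\kappa_+}$ already fails on $\mathbb{RP}^2$), together with the bookkeeping in the equality case that pins the pinching constant to $1$. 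The sharper version in the Appendix then replaces this soft injectivity-radius input with a quantitative lower bound for the first Laplace--Beltrami eigenvalue obtained via a Sturm--Liouville comparison.
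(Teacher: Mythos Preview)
Your proof is correct, and it takes a genuinely different (and more elementary) route than the paper's.

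For the area bound, the paper goes through first-eigenvalue estimates: after normalizing $K_{\min}=1$, it combines Lichnerowicz's bound $\lambda_1\ge 2$ (recovered from the Sturm--Liouville problem of Theorem~\ref{thm:evlowerbound}) with Hersch's inequality $\lambda_1\le 8\pi/A$ to obtain $A\le 4\pi$. You get the same conclusion in one line from Gauss--Bonnet, $4\pi=\int K\,dA\ge K_{\min}A$. Both then feed into the identical Klingenberg estimate $D\ge \pi/\sqrt{K_{\max}}$. For the equality case the paper argues that $D=\pi/\sqrt{K_{\max}}$ forces diameter to equal injectivity radius (the Blaschke condition) and invokes Green's rigidity theorem, whereas you trace equality back through Gauss--Bonnet to force $K\equiv K_{\min}$ directly. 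Your argument is cleaner and avoids the Blaschke/Green machinery entirely.

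What the paper's detour buys is extensibility: the eigenvalue lower bound can be sharpened when $D<\pi$ (this is exactly Lemma~\ref{lem:1sttaylorlambda}), which in turn sharpens the area bound and yields the refined inequality of Theorem~\ref{ap_thm}. The Gauss--Bonnet bound $A\le 4\pi/K_{\min}$ is already saturated by the round sphere and cannot be improved using only the diameter, so your route does not open onto the Appendix. (Your closing sentence has this slightly backwards: the Appendix keeps the Klingenberg input and instead upgrades the area bound via the improved $\lambda_1$ estimate.)
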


We combine this inequality with the main result from \cite{recent} in order to prove Theorem~\ref{pinched}. The main result of \cite{recent} is achieved via a combination of techniques from Riemannian and symplectic geometry.

\begin{theorem}[\cite{recent}]\label{thm:L2Aineq}
Let $(S^2,g)$ be a $\delta$-pinched metric with $\delta > \frac{4+\sqrt{7}}{8} \approx 0.83$. Then $L^2(S^2, g) \leq \pi A(S^2, g)$ where $A(S^2,g)$ is the area.
\end{theorem}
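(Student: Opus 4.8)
The inequality $L^2 \leq \pi A$ is sharp: it is an equality on the round sphere (where $L = 2\pi/\sqrt{K}$ and $A = 4\pi/K$), and more generally on any Zoll metric, so any proof must exploit a near-Zoll mechanism, and the strong pinching hypothesis $\delta > \frac{4+\sqrt 7}{8}$ is precisely what forces the geodesic flow into such a regime. The plan is to pass to the unit cotangent bundle $U^*S^2 \cong \mathbb{RP}^3$ equipped with the restriction $\lambda$ of the Liouville form, so that the unit-speed geodesic flow is the Reeb flow of $\lambda$; then the length of the shortest closed geodesic is the minimal period $T_{\min}(\lambda)$ of a closed Reeb orbit, and a Santaló-type computation identifies $\pi A(S^2,g)$ with the relevant contact-volume invariant of $(\mathbb{RP}^3,\lambda)$. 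In these terms the claim becomes a contact systolic inequality $T_{\min}(\lambda)^2 \leq (\text{contact volume})$, of exactly the kind established near Zoll contact forms by Abbondandolo--Bramham--Hryniewicz--Salom\~ao and refined in the symplectic literature; the Riemannian content is to verify that a $\delta$-pinched metric with $\delta$ above the stated threshold satisfies the dynamical hypotheses of that machinery.

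Concretely, one would first produce a disk-like global surface of section $\Sigma$ for the geodesic flow, which is available under the pinching hypothesis, with the stronger bound $\delta > \frac{4+\sqrt 7}{8}$ used to control the first-return map quantitatively. The return map $\psi\colon \Sigma \to \Sigma$ preserves the area form $d\lambda|_\Sigma$; its total area equals (after the above normalization) $\pi A$, the binding orbit has period equal to a distinguished closed geodesic, and interior periodic points of $\psi$ correspond to the remaining closed geodesics, whose lengths are the binding period plus the $\lambda$-action accumulated between consecutive returns to $\Sigma$. Under the pinching bound, Rauch/Berger comparison of Jacobi fields along geodesics controls the derivative of the geodesic flow, hence the deviation of $\psi$ from a rigid rotation of the disk; the explicit constant $\frac{4+\sqrt 7}{8}$ is what makes $\psi$ a monotone twist map close enough to a rotation for the local theory to apply. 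The final step is the local systolic inequality for such a return map: for an area-preserving disk map that is a suitably small perturbation of a rotation, the minimal action over its periodic points does not exceed the mean action, which by a Calabi-invariant computation equals the total area of the disk; feeding this back through the correspondence gives $T_{\min}(\lambda)^2 \leq \pi A$, with equality exactly when every Reeb orbit is closed with equal action, i.e.\ when the metric is Zoll.

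I expect the main obstacle to be precisely the bridge between the pointwise curvature pinching and the dynamical near-Zoll hypotheses: constructing the disk-like global surface of section, showing that the return map is a genuine monotone twist map, and extracting the quantitative closeness-to-rotation with the sharp threshold $\frac{4+\sqrt 7}{8}$. This is where the blend of Riemannian comparison geometry and Hamiltonian/symplectic dynamics cited in \cite{recent} is indispensable, and where a purely Riemannian argument falls short: even using that the shortest closed geodesic is simple (Calabi--Cao) and bounds two disks $D_1,D_2$ with $\int_{D_i} K = 2\pi$ by Gauss--Bonnet, one only recovers area and diameter estimates weaker than $L^2 \leq \pi A$.
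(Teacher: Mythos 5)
The paper does not prove this statement at all: Theorem~\ref{thm:L2Aineq} is imported verbatim from Abbondandolo--Bramham--Hryniewicz--Salom\~ao \cite{recent}, and the authors' only comment on its proof is that it combines Riemannian and symplectic techniques. What you have written is therefore not comparable to anything in the paper; it is, however, a broadly faithful reconstruction of the strategy actually used in \cite{recent}: lift the geodesic flow to a Reeb flow (in fact ABHS work on the double cover $S^3$ of the unit tangent bundle, with the binding orbit a lift of the shortest closed geodesic, which is simple by Calabi--Cao), build a disk-like global surface of section, and reduce the systolic inequality to a statement that the minimal action of periodic points of the area-preserving return map is bounded by its Calabi invariant, with the threshold $\frac{4+\sqrt{7}}{8}$ arising from Jacobi-field comparison forcing the transverse rotation number of the binding orbit into the window where the return map satisfies the required twist condition. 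Your identification of where the difficulty lives is accurate.

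That said, as a proof your text is a roadmap, not an argument. Each of its three pillars --- existence of the disk-like global surface of section, the quantitative ``close to a rotation / monotone twist'' property derived from $\delta$-pinching with that specific constant, and the fixed-point theorem bounding minimal action by the Calabi invariant --- is itself a substantial theorem that you assert rather than establish, and the normalization matching the contact volume of the (double cover of the) unit tangent bundle with $\pi A$ is stated only up to ``a Santal\'o-type computation.'' There is also a small inaccuracy: the action of an interior periodic point is the total contact action of the corresponding Reeb orbit (normalized so that the boundary value is the binding period), not ``the binding period plus the accumulated action between returns.'' None of this is a flaw relative to the paper under review, since the paper itself supplies no proof; but if the intent was to prove the theorem rather than to summarize \cite{recent}, the gaps are exactly the ones you flagged in your final paragraph, and they are not closable by elementary Riemannian means.
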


By combining Theorem~\ref{iso} with Theorem~\ref{thm:L2Aineq}, we have for $(S^2,g)$ a $\delta$-pinched metric with $\delta > \frac{4+\sqrt{7}}{8} \approx 0.83$, that  $$L^2(S^2, g) \leq \pi A(S^2, g) \leq   \frac{4D^2(S^2,g)}{\delta} $$ and therefore that $$L(S^2,g) \leq  \frac{2}{\sqrt{\delta}}D(S^2,g).$$ 

Equality here implies equality in the isoperimetric inequality, and therefore that the sphere is round by Theorem~\ref{iso}. We have thus proved Theorem~\ref{pinched} and all that remains in this section is the proof of Theorem~\ref{iso}.

We first note that Theorem~\ref{iso} is a curvature pinched version of the following result due to Calabi and Cao. Moreover, the proof techniques we use are adaptations of theirs to the pinched curvature setting.

\begin{theorem}[\cite{calabi}, Theorem C]
Let $(S^2,g)$ have non-negative curvature. Then $A \leq \frac{8}{\pi} D^2$.
\end{theorem}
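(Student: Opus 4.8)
The plan is to prove the bound by spectral methods rather than by geodesic geometry: sandwich the first nonzero eigenvalue $\lambda_1 = \lambda_1(S^2,g)$ of the Laplace--Beltrami operator between an upper bound that sees only the area and the topology of $S^2$ and a lower bound that uses $K \ge 0$ and the diameter, and then read off the area estimate.

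For the upper bound I would invoke \emph{Hersch's inequality}: every Riemannian metric on $S^2$ satisfies $\lambda_1 A \le 8\pi$, with equality exactly for the round metric. One composes a conformal diffeomorphism onto the round sphere with a M\"obius transformation chosen so that the three coordinate functions become $L^2$-orthogonal to the constants, and uses them as test functions in the variational characterization of $\lambda_1$; this is the only step where the topology of $S^2$ enters. For the lower bound, note that on a surface $\mathrm{Ric} = K g$, so the hypothesis $K \ge 0$ is exactly $\mathrm{Ric} \ge 0$, and the \emph{Li--Yau/Zhong--Yang eigenvalue estimate} for closed manifolds of nonnegative Ricci curvature gives $\lambda_1 \ge \pi^2/D^2$. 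It is essential to use the sharp Zhong--Yang constant $\pi^2$ here; the original Li--Yau constant $\pi^2/2$ would only yield $A \le \tfrac{16}{\pi}D^2$.

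Combining the two estimates,
\[
\frac{\pi^2}{D^2} \ \le\ \lambda_1 \ \le\ \frac{8\pi}{A},
\]
and solving for $A$ gives $A \le 8\pi D^2/\pi^2 = \tfrac{8}{\pi}D^2$, which is the claim.

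For this statement there is essentially no obstacle once the two sharp estimates are quoted, and the constant is not expected to be sharp -- the round sphere does not saturate the Zhong--Yang step, which is consistent with the theorem carrying no equality case. The real work, and the reason the paper revisits this circle of ideas, is the pinched refinement in Theorem~\ref{iso}: there the Zhong--Yang input must be upgraded to a pinching-sensitive lower eigenvalue bound of the form $\lambda_1 \ge 2\pi^2 \delta / D^2$ -- which \emph{is} sharp on the round sphere -- obtained by feeding the pinching hypothesis through Gauss--Bonnet into the one-dimensional Sturm--Liouville model behind the gradient-estimate proof of the eigenvalue inequality; this is carried out in the Appendix. A more hands-on alternative for the stated theorem -- cut $S^2$ along its shortest (necessarily simple) closed geodesic, express the area in Fermi coordinates, use that $K \ge 0$ forces the Jacobian to be concave along the normal geodesics, and integrate via the coarea formula -- also produces an inequality of this shape, but recovering the precise constant $\tfrac{8}{\pi}$ that way appears to require the same quantitative input, so I would not pursue it.
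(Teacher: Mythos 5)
Your proof is correct and is exactly the argument the paper attributes to Calabi and Cao: sandwich $\lambda_1$ between Hersch's upper bound $\lambda_1 \leq 8\pi/A$ and the Zhong--Yang lower bound $\lambda_1 \geq \pi^2/D^2$ (valid here since $K\geq 0$ is $\mathrm{Ric}\geq 0$ on a surface) and solve for $A$. Your closing remarks on the pinched refinement are tangential and slightly off (the paper's Theorem~\ref{iso} uses Lichnerowicz plus Klingenberg's injectivity radius estimate, reserving the Sturm--Liouville analysis for the Appendix), but they do not affect the proof of the stated theorem.
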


Calabi and Cao proved the above inequality by combining an upper bound on the first eigenvalue $\lambda_1(S^2,g) \leq \frac{8\pi}{A(S^2,g)}$ due to Hirsch  \cite{hersch, YangYau} with a lower bound $ \frac{\pi^2}{D^2(M^n,g)} \leq \lambda_1 (M^n,g)$ due to Zhong and Yang \cite{zhongyang} which holds in the non-negative Ricci setting. This lower bound on $\lambda_1(M^n,g)$ has been improved many times in the setting of positive lower bound on Ricci (see \cite{he} for a survey or \cite{AndrewsClutterbuck} for the optimal bound). We use the version from \cite{AndrewsClutterbuck}, which we state for $\lambda_1(S^2,g)$.

\begin{theorem}[\cite{AndrewsClutterbuck}]\label{thm:evlowerbound}
The quantity
\[\lambda_1(d,k) = \inf\lbrace \lambda_1(S^2,g)\,|\, Diam(S^2,g)\leq d,\,K(S^2,g)\geq k\rbrace
\]
is equal to the first eigenvalue $\mu$ of the following Sturm-Liouville problem (SLP) with Neumann initial conditions
\[ (y'\cos(\sqrt{k}x))' + \mu\cos(\sqrt{k}x)y = 0,\quad y'(\pm d/2)=0
\]
\end{theorem}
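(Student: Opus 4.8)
The statement to prove is Theorem~\ref{thm:evlowerbound}, the identification of the infimum $\lambda_1(d,k)$ of the first Laplace eigenvalue over the class of metrics on $S^2$ with $\mathrm{Diam} \le d$ and $K \ge k$ with the first Neumann eigenvalue $\mu$ of the one-dimensional Sturm--Liouville problem $(y'\cos(\sqrt k x))' + \mu \cos(\sqrt k x) y = 0$ on $[-d/2,d/2]$. This is essentially a restatement, adapted to surfaces, of the Andrews--Clutterbuck sharp lower bound on the fundamental gap / first eigenvalue under a lower Ricci (here, Gauss curvature) bound, and the plan is to (i) prove the inequality $\lambda_1(S^2,g) \ge \mu$ for every admissible $g$ by a modulus-of-continuity / gradient comparison argument, and then (ii) show the bound is sharp by exhibiting a sequence of admissible metrics whose first eigenvalue converges to $\mu$.

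\textbf{Lower bound $\lambda_1(S^2,g)\ge\mu$.} First I would recall that the SLP in the statement is the eigenvalue equation for the radial Laplacian on the model space: on the 2-sphere of constant curvature $k>0$ (or the plane when $k=0$), the density $\cos(\sqrt k x)$ is, up to normalization, the weight $\mathrm{sn}_k'(x)$ appearing in the volume element written in geodesic polar/normal coordinates about a totally geodesic equator, so $\mu$ is the first nonzero Neumann eigenvalue of the model problem on a geodesic ball/band of width $d$. The core of the argument is the gradient estimate of Andrews--Clutterbuck: let $u$ be a first eigenfunction on $(S^2,g)$, normalized so that $\min u = -1$, $\max u$ some value $M\le 1$ after rescaling; one shows that $u$ satisfies a modulus of continuity inherited from the one-dimensional model, i.e. there is a function $\varphi$ solving the model ODE with $\varphi' > 0$ on $(-d/2,d/2)$ such that $u(y) - u(x) \le 2\varphi\!\left(\tfrac{d(x,y)}{2}\right)$ for all $x,y$. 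The proof of this is a maximum-principle argument applied to the two-point function $Z(x,y) = u(y)-u(x) - 2\varphi(d(x,y)/2)$ on $(S^2\times S^2)\setminus\mathrm{diag}$: at an interior maximum where $Z=0$ one computes, using that $u$ is an eigenfunction, the second variation of the Riemannian distance, and the lower curvature bound $K\ge k$ to control the Hessian of $d$ via the Riccati comparison, that $\Delta u$ at the two points forces a differential inequality on $\varphi$ which is violated unless the eigenvalue is at least $\mu$. Concluding: evaluating the modulus of continuity at a pair of points realizing $\min u$ and $\max u$, whose distance is at most $d$, and using that $\varphi$ is the model eigenfunction for parameter exactly $\mu$ on an interval of width $d$, forces $\lambda_1(S^2,g) \ge \mu$.

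\textbf{Sharpness.} For the reverse direction I would construct admissible metrics approaching the bound. The natural candidates are thin ``footballs'' or, more precisely, metrics of revolution on $S^2$ obtained by rotating a curve so that the metric looks, on the bulk, like the round sphere of curvature exactly $k$ restricted to a geodesic band of width slightly less than $d$, capped off smoothly at the two poles in a way that keeps $K \ge k$ and does not increase the diameter beyond $d$; as the caps shrink, the Rayleigh quotient of the test function built from the model eigenfunction $\varphi(\text{dist to equator})$ converges to $\mu$, so $\lambda_1 \to \mu$ along the sequence and hence $\lambda_1(d,k) \le \mu$. (When $k=0$ one uses instead long thin capped cylinders, recovering the Zhong--Yang situation $\mu = \pi^2/d^2$ in the limit.) One must check the admissibility constraints carefully: the diameter of such a surface of revolution is attained between the two poles and equals the length of a meridian, so the profile must be chosen with meridian length $\le d$, and the pinching of the caps must respect $K\ge k$, which is a one-dimensional ODE inequality on the profile function that is readily arranged.

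\textbf{Main obstacle.} The hard part is the two-point maximum principle in Step (ii): making rigorous the claim that the curvature lower bound $K\ge k$ yields exactly the model ODE as the worst case requires the precise Riccati/Hessian comparison for $d(x,y)$ along a minimizing geodesic, the choice of the sharp model modulus $\varphi$ (the first eigenfunction of the stated SLP, which by Neumann conditions has $\varphi'(\pm d/2)=0$ and is monotone in between), and care at the cut locus and at the diagonal where $Z$ is only Lipschitz — one handles these by a barrier/viscosity argument or by the standard trick of working on the universal cover localized near the maximizing geodesic. I would cite \cite{AndrewsClutterbuck} for the general mechanism and only indicate the specialization to the surface case, where Ricci$=K$ and the model is one-dimensional with weight $\cos(\sqrt k x)$; the sharpness construction, while elementary, also needs the observation that the infimum need not be attained, which is why the statement is phrased as an infimum equal to $\mu$ rather than a minimum.
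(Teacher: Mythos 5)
The paper does not prove this statement: it is quoted directly from Andrews--Clutterbuck \cite{AndrewsClutterbuck} and used as a black box (the only thing the paper actually does with it is the trivial specialization $d=\pi$, $k=1$ in Lemma~\ref{two}), so there is no internal proof to compare yours against. Your outline is a faithful summary of the strategy in the cited reference -- the two-point maximum principle for $Z(x,y)=u(y)-u(x)-2\varphi(d(x,y)/2)$ with the Riccati/Hessian comparison under $\mathrm{Ric}=K\geq k$ giving the lower bound $\lambda_1\geq\mu$, and a collapsing sequence giving sharpness -- but as written it is a proof plan rather than a proof: the entire analytic content (the second-variation computation at the off-diagonal maximum, the barrier/viscosity handling of the cut locus and the diagonal, and the verification that the worst case is exactly the one-dimensional model with weight $\cos(\sqrt{k}x)$) is deferred back to \cite{AndrewsClutterbuck}, which is precisely what the paper itself does by citing. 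One substantive imprecision in your sharpness step: a band of the genuine round sphere of curvature $k$ capped off does not have $\lambda_1$ near $\mu$; what is needed is a \emph{thin} surface of revolution whose fibers collapse, e.g.\ profile $\epsilon\cos(\sqrt{k}x)$ with $\epsilon\to 0$ (suitably capped while preserving $K\geq k$ and meridian length $\leq d$), so that the rotationally invariant modes dominate and the cross-sectional weight converges to $\cos(\sqrt{k}x)$; your opening mention of ``thin footballs'' is the right picture, but the subsequent description of ``the round sphere of curvature exactly $k$ restricted to a geodesic band'' is not.
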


We can apply this directly to the case $d=\pi,\, k=1$ to obtain the case ${\rm dim}=2$ of the classical result of Lichnerowicz \cite{Lichnerowicz}.

\begin{lemma}\label{two} For $(S^2,g)$ with $Diam(S^2,g)\leq \pi$ and $\,K(S^2,g)\geq 1$ we have $\lambda_1 \geq 2.$
\end{lemma}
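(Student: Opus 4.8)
The plan is to apply Theorem~\ref{thm:evlowerbound} directly with the parameters $d = \pi$ and $k = 1$, and then to pin down the first eigenvalue of the resulting Sturm--Liouville problem by writing down an explicit eigenfunction. Since the hypotheses are exactly $\mathrm{Diam}(S^2,g) \leq \pi$ and $K(S^2,g) \geq 1$, the definition of $\lambda_1(d,k)$ as an infimum over metrics with $\mathrm{Diam} \leq d$ and $K \geq k$ gives
\[
\lambda_1(S^2,g) \;\geq\; \lambda_1(\pi,1) \;=\; \mu,
\]
where $\mu$ is the first eigenvalue of the SLP
\[
(y'\cos x)' + \mu \cos x \, y = 0, \qquad y'(\pm \pi/2) = 0.
\]
So it suffices to show $\mu = 2$.

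Next I would test the candidate eigenfunction $y(x) = \sin x$. With $y' = \cos x$ we get $y'\cos x = \cos^2 x$, hence $(y'\cos x)' = -2\sin x \cos x$, while $\mu \cos x \, y = \mu \sin x \cos x$; the ODE therefore holds precisely when $\mu = 2$. The Neumann conditions also hold, since $y'(\pm \pi/2) = \cos(\pm \pi/2) = 0$. Thus $\mu = 2$ is an eigenvalue of this SLP with eigenfunction $\sin x$.

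It then remains to verify that $\mu = 2$ is the \emph{first} eigenvalue in the relevant sense, i.e.\ the smallest positive one (the constant function gives the trivial eigenvalue $0$, which corresponds to $\lambda_0 = 0$ of the Laplacian rather than $\lambda_1$). This follows from standard Sturm--Liouville oscillation theory: for a regular Neumann problem the $k$-th eigenfunction has exactly $k$ interior zeros, and $\sin x$ has exactly one zero in the open interval $(-\pi/2,\pi/2)$, namely at $x = 0$. Hence $\sin x$ is the first non-constant eigenfunction, $\mu = 2$ is the first positive eigenvalue, and combining with the displayed inequality yields $\lambda_1(S^2,g) \geq 2$.

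I do not expect a genuine obstacle in this argument; the only point requiring a little care is the nodal-count step confirming that $\mu = 2$ is the first positive eigenvalue and not a higher one. As a consistency check, this is precisely the $n = 2$ instance of Lichnerowicz's theorem ($\lambda_1 \geq n$ when $\mathrm{Ric} \geq (n-1)g$), so the value $2$ is the expected sharp constant.
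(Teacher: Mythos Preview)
Your proposal is correct and follows exactly the paper's approach: apply Theorem~\ref{thm:evlowerbound} with $d=\pi$, $k=1$ and recognize $y(x)=\sin x$ as the first eigenfunction of the resulting SLP with eigenvalue $\mu=2$. The paper's proof is a one-line assertion of this fact, whereas you additionally verify the ODE and boundary conditions and invoke the nodal count to confirm $\mu=2$ is indeed the first positive eigenvalue; your caveat about needing ``a little care'' is apt, since at $d=\pi$ the weight $\cos x$ vanishes at the endpoints and the problem is singular rather than regular, but neither you nor the paper pursues this technicality further.
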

\begin{proof}
The first eigenfunction of $(y'\cos(x))' + \mu\cos(x)y = 0,~ y'(\pm \pi/2)=0$ is $y(x)=\sin(x)$ which has eigenvalue $2$.
\end{proof}

We can now provide a proof of Theorem~\ref{iso}. The main insight is that when considering diameter bounds, we can use Klingenberg's injectivity radius estimate to translate between the positive and the pinched curvature settings. 

\begin{proof}[Proof of Theorem~\ref{iso}]

For simplicity we will drop $(S^2,g)$ from our notation. Since the inequality is scale invariant, let us rescale the metric so $K_{min}=1$ and by Myers theorem $D \leq \pi$. Lemma~\ref{two} then gives the bound $ \lambda_1\geq2$. Following the ideas of \cite{calabi} we combine this lower bound on $\lambda_1$ with the upper bound due to Hirsch $\lambda_1 \leq \frac{8\pi}{A}$ (see \cite{hersch}, \cite{YangYau}) to yield $A \leq 4\pi$. 
Klingenberg's injectivity radius estimate for positive metrics on 2-spheres says that $$D \geq \frac{\pi}{\sqrt{K_{max}}} = \pi \sqrt{\delta}.$$ We have therefore related both area and diameter to $\pi$, and conclude that $$\pi A \leq 4\pi^2 \leq \frac{4D^2}{\delta}.$$

In the case of equality we have $4\pi^2 = \frac{4D^2}{\delta}$ and therefore $D=\frac{\pi}{\sqrt{K_{max}}}$. This is the limiting case of Klingenberg's injectivity radius estimate, and therefore diameter equals injectivity radius. This is the so called Blaschke condition, and for Blaschke metrics on the 2-sphere Green \cite{green} gives an elementary proof using classical surface geometry that the sphere must be round. 
\end{proof}

We finish by observing that the isoperimetric inequality in Theorem~\ref{iso} is sharp in the sense that the estimate $A/D^2$ approaches $4/\pi$ as we move towards the round metric, i.e.~as $\delta$ approaches $1$. We do not recover the Calabi-Cao inequality as $\delta$ approaches 0, i.e.~for $(S^2,g)$ with non-negative curvature. Note that the Calabi-Cao inequality is not sharp for convex metrics on $(S^2,g)$; Alexandrov has conjectured that the sharp inequality is realized by the doubled disk metric where $A/D^2 = \pi/2$. 

\section{Appendix}

Using more advanced techniques from Sturm-Liouville theory we can improve the isoperimetric inequality in Theorem~\ref{iso} when the sphere is not round. This improved version of the inequality is not needed for the proof of Theorem~\ref{pinched} that was presented in Section~\ref{pinch}.

\begin{theorem}\label{ap_thm}
Let $(S^2,g)$ be a $\delta$-pinched metric and denote by $\eta=D\sqrt{K_{min}}$.  Then 
\begin{equation*}\label{eq:CCpinched}
%\pi A \leq \frac{4\pi^2}{K_{min}(2-\sin(\eta/2))} \leq \frac{4D^2}{\delta(2-\sin(\eta/2))}.
\pi A \leq \frac{4D^2}{\delta(2-\sin(\eta/2))}.
\end{equation*}
\end{theorem}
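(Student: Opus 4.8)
The plan is to reproduce the proof of Theorem~\ref{iso} almost verbatim, replacing Lichnerowicz's bound $\lambda_1\ge 2$ (Lemma~\ref{two}) by a sharper, diameter-dependent lower bound for the first eigenvalue of the Sturm--Liouville problem of Theorem~\ref{thm:evlowerbound}. As there, first rescale so that $K_{\min}=1$; then $\eta=D$, Myers' theorem gives $D\le\pi$, and $K_{\max}=1/\delta$. Since the rescaled metric has $\mathrm{Diam}\le D$ and $K\ge 1$, Theorem~\ref{thm:evlowerbound} (applied with $d=D$, $k=1$) gives $\lambda_1(S^2,g)\ge\mu(\eta)$, where $\mu(\eta)$ is the first positive eigenvalue of $(y'\cos x)'+\mu\cos x\,y=0$ on $[-\eta/2,\eta/2]$ with $y'(\pm\eta/2)=0$. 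Combining this with Hirsch's bound $\lambda_1\le 8\pi/A$ and Klingenberg's estimate $D\ge\pi/\sqrt{K_{\max}}=\pi\sqrt\delta$ yields
\[
\pi A\ \le\ \frac{8\pi^2}{\mu(\eta)}\ \le\ \frac{8D^2}{\delta\,\mu(\eta)},
\]
so the theorem reduces to proving the eigenvalue estimate $\mu(\eta)\ge 4-2\sin(\eta/2)$ for $\eta\in(0,\pi]$ --- an estimate that is an equality at $\eta=\pi$, where $\mu(\pi)=2$ with eigenfunction $\sin x$.

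To bound $\mu(\eta)$ from below I would pass to the variable $z=\sin x$, which converts the SLP into Legendre's equation $\bigl((1-z^2)y'\bigr)'+\mu\,y=0$ on $[-s,s]$ with $y'(\pm s)=0$, where $s:=\sin(\eta/2)\in(0,1]$. Two features of this form are decisive: the weight has become the polynomial $1-z^2$, and the $L^2$-pairing is now unweighted. By Sturm oscillation theory and the $z\mapsto-z$ symmetry, the eigenfunction realizing $\mu(\eta)$ has a single interior zero and must therefore be odd with that zero at the origin (an even eigenfunction with one zero would vanish identically), so
\[
\mu(\eta)\ =\ \min\Bigl\{\ \frac{\int_0^s(1-z^2)(y')^2\,dz}{\int_0^s y^2\,dz}\ :\ y\in H^1([0,s]),\ y(0)=0,\ y\not\equiv 0\ \Bigr\}.
\]

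I would then prove the bound by a Poincar\'e-type estimate. For admissible $y$, Cauchy--Schwarz gives $y(z)^2\le z\int_0^z (y'(t))^2\,dt$; integrating over $z\in[0,s]$ and exchanging the order of integration yields
\[
\int_0^s y^2\,dz\ \le\ \int_0^s\frac{s^2-t^2}{2}\,(y'(t))^2\,dt\ \le\ \frac{s^2}{2}\int_0^s(1-t^2)(y'(t))^2\,dt,
\]
the last step using $s^2-t^2\le s^2(1-t^2)$ on $[0,s]$ (valid since $s\le 1$). Hence $\mu(\eta)\ge 2/s^2=2/\sin^2(\eta/2)$, and the elementary inequality $2-(4-2s)s^2=2(s-1)(s^2-s-1)\ge 0$ on $(0,1]$ upgrades this to $\mu(\eta)\ge 4-2\sin(\eta/2)$, finishing the proof. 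This route in fact gives the slightly stronger $\pi A\le 4D^2\sin^2(\eta/2)/\delta$; moreover equality in the stated inequality forces $s=1$ (since $2/s^2>4-2s$ for $s<1$), hence $\eta=\pi$, hence --- chasing equality in Klingenberg's estimate --- $\delta=1$ and the metric has constant curvature, so an ``equality iff round'' clause may be appended.

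The crux is this eigenvalue estimate: the geometric inputs (Theorem~\ref{thm:evlowerbound}, Hirsch's and Klingenberg's inequalities, Myers' theorem) are off the shelf and their assembly is routine, so the real content is producing an SLP comparison that stays sharp at $\eta=\pi$. The Legendre substitution is what makes this feasible --- a weighted Poincar\'e inequality applied directly to $(y'\cos x)'+\mu\cos x\,y=0$ degrades as $\eta\to\pi$ because $\cos x$ vanishes at the endpoints, and fails to recover even $\mu\ge 2$; trading the weight $\cos x$ for $1-z^2$ and, crucially, the measure for $dz$ is precisely what lets the Cauchy--Schwarz argument go through and remain tight in the limit.
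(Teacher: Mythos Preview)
Your proof is correct and assembles the same geometric ingredients as the paper (rescale to $K_{\min}=1$, invoke Theorem~\ref{thm:evlowerbound}, combine with Hirsch and Klingenberg), reducing the theorem to the SLP bound $\mu(\eta)\ge 4-2\sin(\eta/2)$; the genuine difference is in how that bound is proved. The paper (Lemma~\ref{lem:1sttaylorlambda}) uses the substitution $\tanh u=\sin x$ to transform the SLP into $y''+\mu\sech^2(u)y=0$ on $[-T,T]$, then extends the normalized eigenfunction $\hat y_T$ by constants to all of $\mathbb{R}$ and plugs it into the Rayleigh quotient for the limiting problem $T=\infty$ (where $\mu=2$, eigenfunction $\tanh u$); the crude bound $|\hat y_T(\pm T)|^2\ge\frac12$ then yields $\mu(d)\ge 2+2(1-\tanh T)=4-2\sin(d/2)$. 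You instead pass to $z=\sin x$, landing on Legendre's equation on $[-s,s]$ with unweighted $L^2$-norm, restrict to odd functions, and run a direct Cauchy--Schwarz/Fubini Poincar\'e estimate. Your route is more elementary (no comparison to a limiting singular problem) and actually yields the strictly stronger bound $\mu(\eta)\ge 2/\sin^2(\eta/2)$, hence the sharper inequality $\pi A\le 4D^2\sin^2(\eta/2)/\delta$; it is also sharp at $\eta=\pi$ (your Poincar\'e inequality becomes equality for $y(z)=z$ at $s=1$). The paper's extension argument, on the other hand, is tailored to perturbation from the round case and dovetails with the subsequent computation of $\mu'(\pi)$ and bounds on $\mu''(\pi)$ in the appendix. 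The two substitutions are related by $z=\tanh u$, so you are really looking at the same object through a different coordinate --- one that trades the vanishing weight $\cos x$ for the polynomial $1-z^2$ and, as you note, makes the Poincar\'e step go through cleanly.
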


First note that $2- \sin(\eta/2) \geq 1$ so that this isoperimetric inequality is indeed an improvement on that of Theorem~\ref{iso}. Moreover, because $2- \sin(\eta/2) > 1$ when $\eta \neq \pi$ we note that Theorem~\ref{ap_thm} together with Cheng's \cite{cheng} rigidity result when $D=\pi/\sqrt{K_{min}}$ provide an alternate proof of the equality case of Theorem~\ref{pinched} that does not depend on the Blaschke ideas from Section~\ref{pinch}.

This new inequality follows as before by combining the inequality due to Hirsch with an improved lower bound on the first eigenvalue. We therefore set out to calculate the linear approximation of $\lambda_1(d,1)$ at $d=\pi$. The main idea is that because the coefficients of the SLP are constant while the domain varies, we can restrict and extend eigenfunctions to compare the values $\lambda_1(d,1)$ as $d\rightarrow\pi$. Let us then define by $\mu(d)$ the SLP eigenvalue $\lambda_1(d,1)$. We then have

\begin{lemma}\label{lem:1sttaylorlambda} For $\pi\geq d >0$ 
\[2 + 2(1-\sin(d/2)) \leq \mu(d).
\]
\end{lemma}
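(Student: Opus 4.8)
The plan is to bound $\mu(d)=\lambda_1(d,1)$ from below by comparing it against the value $\mu(\pi)=2$ (Lemma~\ref{two}) through a cut-and-extend construction on eigenfunctions, exploiting the fact that the coefficient $\cos x$ of the SLP does not depend on $d$. The case $d=\pi$ is the equality $2=2$, so assume $d<\pi$; then the SLP $(y'\cos x)'+\mu\cos x\,y=0$, $y'(\pm d/2)=0$ is regular since $\cos x>0$ on $[-d/2,d/2]$, and classical Sturm--Liouville theory applies. After the constant eigenfunction, the first positive eigenfunction $y_d$ has exactly one interior zero, and by the reflection symmetry $x\mapsto -x$ it is odd with that zero at the origin. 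A short sign analysis of $(y_d'\cos x)'=-\mu(d)\cos x\,y_d$ on $(0,d/2)$ (the right-hand side is negative there, while $y_d'\cos x$ must reach $0$ at $x=d/2$ by the Neumann condition) shows $y_d$ is strictly increasing on $[0,d/2]$, so that $m:=y_d(d/2)=\max_{[-d/2,d/2]}|y_d|>0$. Normalizing $\int_{-d/2}^{d/2}y_d^2\cos x\,dx=1$ and integrating the equation against $y_d$ (the boundary term vanishing by the Neumann condition) gives $\int_{-d/2}^{d/2}(y_d')^2\cos x\,dx=\mu(d)$.

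Next I would extend $y_d$ to a test function $\tilde y$ on $[-\pi/2,\pi/2]$ by setting it equal to the constant $m$ on $[d/2,\pi/2]$ and to $-m$ on $[-\pi/2,-d/2]$. Since $y_d'(\pm d/2)=0$ this extension is $C^1$ (and lies in the natural energy space, the weighted integrals below converging because $\tilde y$ is bounded), and $\tilde y$ is odd, hence $L^2(\cos x\,dx)$-orthogonal to the constants. The variational characterization of the first eigenvalue applied at $d=\pi$, together with $\mu(\pi)=2$ from Lemma~\ref{two}, then yields
\[
2 \;\leq\; \frac{\int_{-\pi/2}^{\pi/2}(\tilde y')^2\cos x\,dx}{\int_{-\pi/2}^{\pi/2}\tilde y^2\cos x\,dx}
\;=\; \frac{\mu(d)}{1+2m^2\bigl(1-\sin(d/2)\bigr)},
\]
because $\tilde y'$ vanishes off $[-d/2,d/2]$ while $\int_{d/2}^{\pi/2}\cos x\,dx=1-\sin(d/2)$. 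Rearranging gives $\mu(d)\geq 2+4m^2\bigl(1-\sin(d/2)\bigr)$.

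Finally I would feed in a lower bound for $m^2$: from the normalization and $|y_d|\leq m$,
\[
1=\int_{-d/2}^{d/2}y_d^2\cos x\,dx \;\leq\; m^2\int_{-d/2}^{d/2}\cos x\,dx \;=\; 2m^2\sin(d/2),
\]
so $m^2\geq \frac{1}{2\sin(d/2)}\geq \frac12$ since $d/2\leq \pi/2$. As $1-\sin(d/2)\geq 0$, substituting gives $\mu(d)\geq 2+2\bigl(1-\sin(d/2)\bigr)$, the claimed bound. The main (really the only delicate) obstacle is the structural input in the first step: establishing that the first positive eigenfunction is odd and monotone on the half-interval, which requires the Sturm oscillation count together with the ODE sign argument. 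A secondary, milder point is the justification of the variational principle at $d=\pi$, where the weight $\cos x$ degenerates at the endpoints; this causes no trouble because the comparison function $\tilde y$ is bounded and all the weighted integrals involved converge.
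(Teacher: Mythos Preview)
Your proof is correct and follows essentially the same approach as the paper: extend the first eigenfunction of the interval $[-d/2,d/2]$ by constants to $[-\pi/2,\pi/2]$, plug into the Rayleigh quotient for $\mu(\pi)=2$, and bound the endpoint value via the normalization and the total mass $\int_{-d/2}^{d/2}\cos x\,dx=2\sin(d/2)\le 2$. The only difference is cosmetic: the paper first performs the substitution $\tanh u=\sin x$ (so $\cos x\,dx=\sech^2 u\,du$ and the SLP becomes $y''+\mu\,\sech^2(u)\,y=0$ on $[-T,T]$ with $\tanh T=\sin(d/2)$), which pushes the singular endpoint $d=\pi$ out to $T=+\infty$ and makes the extension-by-constants literally onto $\mathbb{R}$; your version stays in the $x$-variable and handles the degenerate weight at $\pm\pi/2$ directly, which is equally valid since your test function is bounded.
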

\begin{proof}
Define the change of variable $\tanh(u)=\sin(x)$ on the SLP, helpfully suggested to us by Ben Andrews. This becomes

\begin{equation}\label{eq:modSLP}
    y'' + \mu.\sech^2(u)y = 0,\quad y'(\pm T)=0,\, \quad \tanh(T)=\sin(d/2)
\end{equation}
In particular, for $d=1, T=+\infty$, the eigenvalue $\mu=2$ is realized by $y_{+\infty}=\tanh(u)$.

Recall that the first eigenvalue for a SLP can be written as a Rayleigh quotient

\begin{equation*}
    \mu(d) = \inf_{y\neq 0, y'(\pm T)=0} R[y,T]
\end{equation*}
where

\[R[y,t]= \frac{\int_{-T}^T -y.y''du}{\int_{-T}^T y^2\sech^2(u)du}
\]

Denote by $\hat{y}_T$ the eigenfunction of the SLP (\ref{eq:modSLP}) normalized so $\int_{-T}^T \hat{y}_T^2\sech^2(u)du=1$. Extended $\hat{y}_T$ as a constant to the entire real line so (while keeping the same notation):

\[  \hat{y}_T(u) =
  \begin{cases}
     \hat{y}_T(u) & \text{if $|u|\leq T$} \\
     \hat{y}_T(T) & \text{if $u>T$}\\
     \hat{y}_T(-T) & \text{if $u<-T$}
    
  \end{cases}
\]

In order to use $\mu(1)\leq R[\hat{y}_T,+\infty]$ we will need to bound $\hat{y}_T(\pm T)$. Observe first that given the symmetries of the SLP (\ref{eq:modSLP}) the function $\hat{y}_T$ is an odd function. Since $\mu(d)$ is the first eigenvalue, we know that $\hat{y}'_T$ does not vanish in the open interval $(-T,T)$. Hence $|\hat{y}_T(\pm T)| = \max_{-T\leq u \leq T}  |\hat{y}_T(u)|$, so then

\begin{equation*}
    1=\int_{-T}^T \hat{y}^2_T\sech^2(u)du \leq |\hat{y}_T(\pm T)|^2 \int_{-T}^T \sech^2(u)du \leq 2|\hat{y}_T(\pm T)|^2
\end{equation*}

Replacing now $R[\hat{y}_T,+\infty]$

\begin{equation*}
    2=\mu(\pi)\leq R[\hat{y}_T,+\infty] = \frac{\mu(d)}{1 + 2|\hat{y}_T(T)|^2\int_T^{\infty} \sech^2(u)du}
\end{equation*}

Hence we obtain

\begin{equation*}
    2 + 2(1-\tanh(T)) \leq \mu(d)
\end{equation*}

Replacing now $\tanh(T)=\sin(d/2)$ and using the Taylor series of sine at $\pi/2$ we finally get

\begin{equation*}
    2 + 2(1-\sin(d/2)) \leq \mu(d).
\end{equation*}\end{proof}

We are now ready to prove Theorem~\ref{ap_thm}. 

\begin{proof}
The proof proceeds exactly as the proof for Theorem~\ref{iso} and we begin by rescaling the metric so that $K_{min}=1$ and by Myers theorem $\eta = D \leq \pi$. Lemma~\ref{lem:1sttaylorlambda} then gives the bound $  2 + 2(1-\sin(d/2)) \leq \mu(d) = \lambda_1(d,1)$. Following the ideas of \cite{calabi} we combine this lower bound on $\lambda_1$ with the upper bound due to Hirsch $\lambda_1 \leq \frac{8\pi}{A}$ (see \cite{hersch}, \cite{YangYau}) to yield 

\begin{equation*}
    2 + 2(1-\sin(\eta/2)) \leq \frac{8\pi}{A}
\end{equation*}
which we can manipulate to
\begin{equation*}
    \pi A \leq \frac{8\pi^2}{2 + 2(1-\sin(\eta/2))}.
\end{equation*}

Klingenberg's injectivity radius estimate for positive metrics on 2-spheres says that $$D \geq \frac{\pi}{\sqrt{K_{max}}} = \pi \sqrt{\delta}.$$ 

We have therefore related both area and diameter to $\pi$, and conclude that 

\begin{equation*}\label{eq:sdependance}
    \pi A \leq \frac{8\pi^2}{2 + 2(1-\sin(\eta/2))} = \frac{4\pi^2}{K_{min}(2-\sin(\eta/2))} \leq \frac{4D^2}{\delta(2-\sin(\eta/2))}.
\end{equation*} \end{proof}

Finally, while not necessary for the proof of Theorem \ref{ap_thm}, the following Lemma of independent interest expands the approach of Lemma \ref{lem:1sttaylorlambda} and gives an idea of the sharpness of its inequality, both in general and at the limiting case of $d=\pi$. 

\begin{lemma}
For $\pi\geq d \geq 2\arcsin(\tanh(3/2)) \approx2.263$
\[\mu(d) \leq 2 + \cos^2(d/2) A(d),
\]
where $$A(d) = \frac{6[\sin(d/2) - \arctanh(\sin(d/2))\cos^2(d/2)]}{\sin^3(d/2) - 3\cos^2(d/2)[\sin(d/2) - \arctanh(\sin(d/2))\cos^2(d/2)]}.$$
Moreover, at the limiting case $d=\pi$ we have $\mu'(\pi)=0$ and $\frac14\leq\mu''(\pi)\leq\frac32$.
\end{lemma}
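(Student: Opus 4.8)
The strategy is to push the Rayleigh-quotient comparison of Lemma~\ref{lem:1sttaylorlambda} one order further, using a test function that is an honest rescaling of the known eigenfunction at $d=\pi$. Recall that on the modified SLP~(\ref{eq:modSLP}) the domain is $[-T,T]$ with $\tanh(T)=\sin(d/2)$, and at $d=\pi$ (so $T=+\infty$) the first eigenfunction is $y_{+\infty}(u)=\tanh(u)$ with $\mu=2$. For finite $T$ one has $\mu(d)\le R[y,T]$ for any admissible $y$ with $y'(\pm T)=0$. The point is to choose $y$ equal to $\tanh(u)$ plus a small correction that fixes up the Neumann condition at $\pm T$ — e.g. a cubic-type correction $y_T(u)=\tanh(u)+c(T)\,\phi(u)$ where $\phi$ is chosen so that $\phi'(\pm T)=-\tanh'(\pm T)=-\sech^2(T)$ — and then expand $R[y_T,T]$ to second order in $1/\cosh(T)$, equivalently in $\cos(d/2)$. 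Carrying out the two integrals $\int_{-T}^T -y_T y_T''\,du$ and $\int_{-T}^T y_T^2\sech^2(u)\,du$ in closed form (these are elementary, involving $\tanh$, $\sech^2$, and $\arctanh(\sin(d/2))$) produces exactly the stated rational function $A(d)$, with the factor $\cos^2(d/2)$ coming from $c(T)^2\sim\sech^4(T)$ relative to the normalization. The hypothesis $d\ge 2\arcsin(\tanh(3/2))$ is presumably the range in which the chosen correction $\phi$ keeps $y_T$ a legitimate (non-vanishing-derivative, or sign-controlled) first-mode test function, so one should state $\phi$ explicitly and record the inequality $T\ge 3/2$ that this threshold encodes.

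For the limiting statement at $d=\pi$, I would combine the two one-sided estimates already available. Lemma~\ref{lem:1sttaylorlambda} gives $\mu(d)\ge 2+2(1-\sin(d/2))$, and since $1-\sin(d/2)=\tfrac12\cos^2(d/2)+O(\cos^4)$ near $d=\pi$, this yields $\mu(d)\ge 2+\cos^2(d/2)+O(\cos^4(d/2))$; on the other side the new Lemma gives $\mu(d)\le 2+\cos^2(d/2)A(d)$, and one checks $A(\pi)=\tfrac32$ (a direct substitution, using $\sin(\pi/2)=1$, $\cos(\pi/2)=0$, and L'Hôpital or a Taylor expansion of numerator and denominator in $\cos(d/2)$) and $A'(\pi)=0$ by the evenness of both sides in $(d-\pi)$. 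Writing $\varepsilon=\pi-d$, both bounds are even in $\varepsilon$, so $\mu'(\pi)=0$ follows immediately, and the second derivatives are trapped: from the lower bound $\mu''(\pi)\ge \tfrac{d^2}{d\varepsilon^2}\big(2+2(1-\sin((\pi-\varepsilon)/2))\big)\big|_{\varepsilon=0}=\tfrac14$, and from the upper bound $\mu''(\pi)\le \tfrac{d^2}{d\varepsilon^2}\big(2+\cos^2((\pi-\varepsilon)/2)A(\pi-\varepsilon)\big)\big|_{\varepsilon=0}$, which evaluates to $A(\pi)\cdot\tfrac{d^2}{d\varepsilon^2}\cos^2((\pi-\varepsilon)/2)\big|_0=\tfrac32\cdot\tfrac14\cdot 2=\tfrac34$... wait — one must be careful that $\cos^2((\pi-\varepsilon)/2)=\sin^2(\varepsilon/2)=\tfrac{\varepsilon^2}{4}+O(\varepsilon^4)$, so its value is $O(\varepsilon^2)$ and $A'(\pi)=0$ contributes nothing to the second derivative; hence the upper bound for $\mu''(\pi)$ is $2\cdot\tfrac14\cdot A(\pi)=\tfrac12\cdot\tfrac32=\tfrac34\le\tfrac32$. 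So the claimed window $[\tfrac14,\tfrac32]$ holds with room to spare, the stated $\tfrac32$ being the crude constant one records.

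The main obstacle I expect is the closed-form evaluation of the Rayleigh quotient for the corrected test function and the verification that it simplifies to precisely the advertised $A(d)$: the integrals $\int \phi^2\sech^2(u)\,du$, $\int \tanh(u)\phi\sech^2(u)\,du$, and $\int \phi\phi''\,du$ over $[-T,T]$ must be computed and then assembled, and getting the $\arctanh(\sin(d/2))$ terms to land in both numerator and denominator as written requires care with integration by parts and the boundary terms at $\pm T$ (which do not fully vanish since only $y'$, not $\phi'$, satisfies the Neumann condition). A secondary subtlety is justifying that the chosen $y_T$ is a valid comparison function for the \emph{first} eigenvalue — i.e. that it lies in the right class — which is where the lower threshold on $d$ enters; one should verify $y_T$ is odd and that $y_T'$ does not change sign on $(-T,T)$ for $T\ge 3/2$, so that $R[y_T,T]$ genuinely upper-bounds $\mu(d)$. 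Once these computations are in hand, everything else is bookkeeping with Taylor expansions at $\pi/2$.
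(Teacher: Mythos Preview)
Your approach is the paper's: the correction is precisely $\phi(u)=u$ with $c(T)=-\sech^2(T)$, i.e.\ the test function $y_T(u)=\tanh(u)-u\,\sech^2(T)$, and one computes $R[y_T,T]$ in closed form. Two points need repair.

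First, the threshold $T\ge 3/2$ does \emph{not} arise from any validity condition on $y_T$ as a first-mode test function. Since $y_T$ is odd it is orthogonal to constants in the weighted $L^2$, so $R[y_T,T]\ge\mu(d)$ for every $T>0$; no sign hypothesis on $y_T'$ is needed. What actually happens is that the exact denominator of $R[y_T,T]$ contains an additional positive term $\sech^4(T)\int_{-T}^{T}u^2\sech^2(u)\,du$, and the paper simply \emph{drops} it to obtain the displayed rational expression. Dropping a positive term from the denominator only yields a valid upper bound provided the truncated denominator
\[
\tfrac23\tanh^3(T)-\sech^2(T)\bigl[2\tanh(T)-2T\sech^2(T)\bigr]
\]
remains positive, and this is precisely what $T\ge 3/2$ (equivalently $d\ge 2\arcsin(\tanh(3/2))$) guarantees.

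Second, your evaluation $A(\pi)=\tfrac32$ is wrong. As $d\to\pi$ one has $\sin(d/2)\to 1$, $\cos(d/2)\to 0$, and $\arctanh(\sin(d/2))\cos^2(d/2)=T\sech^2(T)\to 0$; hence the numerator of $A$ tends to $6$ and the denominator to $1$, so $A(\pi)=6$. With $\cos^2(d/2)=\tfrac14(\pi-d)^2+O((\pi-d)^4)$ this gives
\[
\cos^2(d/2)\,A(d)=\tfrac32(\pi-d)^2+O\bigl((\pi-d)^3\bigr),
\]
which is the upper Taylor coefficient recorded in the lemma. Your downstream value $\tfrac34$ is an artifact of the arithmetic slip; once $A(\pi)=6$ is used, your sandwich argument with Lemma~\ref{lem:1sttaylorlambda} reproduces the stated bounds exactly as in the paper.
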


\begin{proof}
Define the test function $y_T = \tanh(u) - u\sech^2(T)$, which by design satisfies the Neumann initial conditions of (\ref{eq:modSLP}). By direct calculation
\begin{eqnarray*}
\begin{split}
    \int_{-T}^T -y_T.y_T''du &= \int_{-T}^T 2\tanh^2(u)\sech^2(u)du - \sech^2(T)\int_{-T}^T 2u\tanh(u)\sech^2(u)du\\
    &= \frac23[2\tanh^3(T)] - \sech^2(T)[2\tanh(T) - 2T\sech^2(T)]
\end{split}
\end{eqnarray*}

\begin{eqnarray*}
\begin{split}
    \int_{-T}^T y_T^2\sech(u)du &= \int_{-T}^T \left(\tanh^2(u) -2u\tanh(u))\sech^2(T) + u^2\sech^4(T)\right)\sech^2(u)du\\
    &= \int_{-T}^T \tanh^2(u)\sech^2(u)du -\sech^2(T)\int_{-T}^T 2u\tanh(u)\sech^2(u)du \\& +  \sech^4(T)\int_{-T}^T u^2\sech^2(u)du\\
    &= \frac13[2\tanh^3(T)] - \sech^2(T)[2\tanh(T) - 2T\sech^2(T)] \\&+ \sech^4(T) \int_{-T}^T u^2\sech^2(u)du.
\end{split}
\end{eqnarray*}
Hence our test function $y_T$ gives us the inequality
\begin{eqnarray*}
\begin{split}
    \mu(d) &\leq \frac{\int_{-T}^T 2\tanh^2(u)\sech^2(u)du - \sech^2(T)\int_{-T}^T 2u\tanh(u)\sech^2(u)du}{\int_{-T}^T \tanh^2(u)\sech^2(u)du -\sech^2(T)\int_{-T}^T 2u\frac{\tanh(u)}{\cosh^2(u)}du + \sech^4(T)\int_{-T}^T u^2\sech^2(u)du}\\
    &\leq \frac{\frac23[2\tanh^3(T)] - \sech^2(T)[2\tanh(T) - 2T\sech^2(T)]}{\frac13[2\tanh^3(T)] - \sech^2(T)[2\tanh(T) - 2T\sech^2(T)]}
\end{split}
\end{eqnarray*}
where the last inequality is valid if $T\geq\frac32$, so for $d\geq2\arcsin(\tanh(3/2))\approx 2.263$.

Using that $\sech^2(T)=1-\tanh^2(T) = 1-\sin^2(d/2)=\cos^2(d/2)$ we have the desired inequality: 
\begin{eqnarray*}
\begin{split}
    \mu(d) &\leq \frac{2\sin^3(d/2) - 3\cos^2(d/2)[\sin(d/2) - \arctanh(\sin(d/2))\cos^2(d/2)]}{\sin^3(d/2) - 3\cos^2(d/2)[\sin(d/2) - \arctanh(\sin(d/2))\cos^2(d/2)]}\\
    &= 2 + \cos^2(d/2) \frac{6[\sin(d/2) - \arctanh(\sin(d/2))\cos^2(d/2)]}{\sin^3(d/2) - 3\cos^2(d/2)[\sin(d/2) - \arctanh(\sin(d/2))\cos^2(d/2)]}
\end{split}
\end{eqnarray*}

Turning our attention to the limiting case of $d=\pi$, we Taylor expand cosine at $\pi/2$ to yield
\begin{equation*}\label{eq:upper2deriv}
    \mu(d) \leq 2 + \frac32(\pi-d)^2 + O((\pi-d)^3).
\end{equation*}

Similarly, we combine Lemma \ref{lem:1sttaylorlambda} and the Taylor expansion of sine at $\pi/2$ to yield

\begin{equation*}\label{eq:lower2deriv}
    2 + \frac14(\pi-d)^2 + O((\pi-d)^3) \leq \mu(d).
\end{equation*}

From this pair of Taylor expansions we calculate that

\begin{equation*}
    \mu'(\pi)=0,\quad\frac14 \leq \mu''(\pi) \leq \frac32 .
\end{equation*}
\end{proof}

\end{document}